\colorlet{colorForCitations}{black!60}
\definecolor{colorForLinks}{HTML}{00005A}
\definecolor{colorForUrls}{HTML}{474770}
\colorlet{colorForBackrefLinks}{colorForLinks!75}
\small\color{black!75}\hypersetup{linkcolor=colorForBackrefLinks}%
\setlist[itemize,1]{label=--}
\setlist[itemize,2]{label=\textasteriskcentered}
\setlist[enumerate,1]{ 
                       label=(\roman*)
                     , itemsep={2.5pt plus 1.0pt minus 1.0pt}
                     }
\renewcommand{\-}{\babelhyphen{hard}}
\let\colon=\egregcolon
\newcommand{\qedplacement}[2]{#1}
\newcommand{\qedNOThere}{\\*}
\newcommand{\qedafterdisplay}{\setlength{\belowdisplayskip}{2pt}}
\newcommand{\citesep}{\allowbreak\,}
\newcommand{\icite}[2][]{\citesep\cite[#1]{#2}}     
\newcommand{\mcite}[2][]{\allowbreak\cite[#1]{#2}}  
\newcommand{\titlecitesep}{\texorpdfstring{\allowbreak\,}{}}
\newcommand{\titlecite}[2]{\allowbreak\texorpdfstring{\cite[#2]{#1}}{}}
\newcommand{\citekerodon}[1]{\cite[#1]{web:kerodon}}
\newcommand{\icitekerodonX}[1]{\citesep\citekerodon{#1}}
\newcommand{\mcitekerodonX}[1]{\allowbreak\citekerodon{#1}}
\newcommand{\kerodontag}[1]{\href{https://kerodon.net/tag/#1}{#1}}
\newcommand{\citekerodontag}[1]{\citekerodon{\kerodontag{#1}}}
\newcommand{\icitekerodon}[1]{\citesep\citekerodontag{#1}}
\newcommand{\titlecitekerodon}[1]{\allowbreak\texorpdfstring{\citekerodontag{#1}}{}}
\newtheoremstyle{mythms}
 {15pt}
 {12pt}
 {}
 {}
 {\bfseries}
 {.}
 {0.6cm plus 0.25cm minus 0.1cm}
 {}
\theoremstyle{mythms}
\newtheorem{globalnum}{DUMMY DUMMY DUMMY}[section]
\newtheorem{thDef}[globalnum]{Definition}
\newtheorem{thTheorem}[globalnum]{Theorem}
\newtheorem{thProposition}[globalnum]{Proposition}
\newtheorem{thLemma}[globalnum]{Lemma}
\newtheorem{thConvention}[globalnum]{Convention}
\newtheorem{thRemark}[globalnum]{Remark}
\newtheorem{thOutlook}[globalnum]{Outlook}
\newtheorem{thExample}[globalnum]{Example}
\newtheorem{thExamples}[globalnum]{Examples}
\newtheorem*{introMainTheorem}{Main Theorem}
\renewcommand{\thmhead}[3]{%
\origthmhead{#1}{#2}{#3}%
\belowpdfbookmark{#1\@ifnotempty{#1}{ }#2\thmnote{ (#3)}}{#1#2}%
}
\renewcommand{\qedsymbol}{$\blacksquare$}
\let\cref=\Cref
\newcommand*{\pcref}[1]{(\cref{#1})}
\newcommand{\crefitem}[2]{\cref{#1}\texorpdfstring{\,}{}\ref{#1#2}}
\newcommand{\envEndsSymbol}{\textcolor{gray}{$\lrcorner$}}
\newcommand{\envEnds}{%
    \begingroup%
        \renewcommand{\qedsymbol}{\envEndsSymbol}%
        \qed%
    \endgroup%
    \ignorespaces%
}
\newcommand{\envEndsHere}{%
    \tikz[remember picture,overlay]\path
        ({current page text area.east} |- {0,0})
        node [anchor=south,inner sep=0pt] {\llap{\envEndsSymbol}}
        ;
}
\newcommand*{\introParagraph}[1]{%
    \phantomsection%
    \belowpdfbookmark{#1}{#1}%
    \paragraph[beforeskip=2cm]{#1.}%
}
\newcommand{\sDMO}[1]{\expandafter\DeclareMathOperator\csname#1\endcsname{#1}}
\DeclareMathOperator{\nerve}{N}
\newcommand{\nervehc}{\nerve^{\hc}}
\newcommand{\nervedg}{\nerve^{\dg}}
\newcommand{\MakeCategoryName}[1]{%
    \expandafter\DeclareMathOperator\csname#1\endcsname{\mathsf{#1}}
}
\newcommand{\Chk}{\Ch_k}
\newcommand{\ChR}{\Ch_{\R}}
\newcommand{\dgCh}{\Ch^{\dg}}
\let\subset=\subseteq
\DeclareMathSymbol{\cross}{\mathbin}{symbols}{"02}
\newcommand{\bd}{\mathsf{b}}
\newcommand{\Cl}{C^{\lone}}
\newcommand{\Cb}{C_{\bd}}
\newcommand{\dg}{\mathsf{dg}}
\newcommand{\defeq}{\coloneqq}
\newcommand{\eqdef}{\eqqcolon}
\newcommand{\Hb}{H_{\bd}}
\newcommand{\hc}{\mathsf{hc}}
\newcommand{\Hl}{H^{\lone}}
\newcommand{\incl}{\hookrightarrow}
\newcommand{\inv}{^{-1}}
\newcommand{\lone}{\ell^1}
\newcommand{\lonenorm}[2][]{\norm[#1]{#2}_1}
\newcommand{\op}{^{\mathsf{op}}}
\newcommand{\setZeroto}[1]{\{0,\ldots,#1\}}
\newcommand{\oo}{$\infty$} 
\newcommand{\after}{\surround{\mskip4mu plus 2mu minus 1mu}{\mathord{\circ}}}
\newcommand{\blank}{\surround{\mskip1.5mu plus 0mu minus 0.5mu}{\blanksymbol}}
\newcommand{\blanksymbol}{\mathord{\color{black!20}\bullet}}
\renewcommand{\equiv}{\mathrel{\simeq}}
\newcommand{\longto}{\longrightarrow}
\newcommand{\Mid}[1][\;]{#1\vert#1}
\newcommand{\pdfInfty}{\texorpdfstring{$\infty$}{infinity}}
\newcommand{\pdfLOne}{\texorpdfstring{$\lone$}{l1}}
\newcommand{\qqtextqq}[1]{\qquad\text{#1}\qquad}
\newcommand{\surround}[2]{#1#2#1}
\newcommand{\xunderrightarrow}[1]{\xrightarrow[{#1}]{}}
\newcommand{\card}[1]{\# #1} 
\newcommand{\coface}[2]{d^{#1}_{#2}} 
\newcommand{\djunion}{\sqcup} 
\newcommand{\dsum}{\oplus} 
\newcommand{\horn}[2]{\Lambda^{#1}_{#2}}
\newcommand{\intersect}{\cap}
\newcommand{\isom}{\cong}
\newcommand{\ordcat}[1]{[#1]} 
\newcommand{\point}{\ast} 
\newcommand{\restrict}[1]{|_{#1}} 
\newcommand{\simplexcat}{\mathord{\triangle}}
\newcommand{\stdsimplex}[1]{\Delta^{#1}}
\newcommand{\topstdsimplex}[1]{\Delta_{\mathsf{top}}^{#1}}
\newcommand{\union}{\cup}
\DeclareMathSymbol{\wedge}{\mathbin}{symbols}{"5F}
\DeclarePairedDelimiter{\abs}{\lvert}{\rvert}
\DeclarePairedDelimiter{\norm}{\lVert}{\rVert}
\newcommand{\pushoutobj}[1]{\pushoutobjPRIV #1}
\newcommand{\pushoutobjPRIV}[5]{#1 \mathbin{{\sqcup}_{#2,#4}} #5}
\newcommand{\pushoutmor}[1]{\pushoutmorPRIV #1}
\newcommand{\pushoutmorPRIV}[4]{#1 \mathbin{{\sqcup}_{#2,#3}} #4}
\newcommand{\institute}{%
    Fakultät für Mathematik, %
    Universität Regensburg, %
    93040 Regensburg, Germany%
}
\newcommand{\email}{%
    \href{mailto:Johannes.Witzig@mathematik.uni-regensburg.de}%
         {Johannes.Witzig@mathematik.uni-regensburg.de}%
}
\title{Abstract Excision and $\lone$-Homology}
\author{Johannes Witzig\thanks{\institute\newline\email}}
\date{Version of \today}
\begin{document}
\maketitle
\thispagestyle{empty}

\begin{abstract}
    We use the abstract setting of excisive functors in the language of
    \oo-categories to show that the best approximation to the $\lone$-homology
    functor by an excisive functor is trivial.

    Then we make an effort to explain the used language on a conceptual level
    for those who do not feel at home with \oo-categories, prove that the
    singular chain complex functor is indeed excisive in the abstract sense,
    and show how the latter leads to classical excision statements in the form
    of Mayer-Vietoris sequences.
\end{abstract}

\begingroup\small
\tableofcontents
\endgroup

\enlargethispage{3cm}
\medskip\noindent%
{\usekomafont{sectioning}Acknowledgments.}\quad
The author was supported by the CRC~1085 \emph{Higher Invariants}
(Universität Regensburg, funded by the DFG).
He is grateful to
Clara Löh,
Hoang Kim Nguyen,
George Raptis and
Georg Biedermann
for helpful conversations and their comments on earlier versions of
this article. He further appreciates the detailed feedback by the
anonymous referee.

\setcounter{section}{-1}

\section{Introduction}
In order to efficiently compute the value of a (generalized) homology theory
on a topological space, a divide and conquer approach is usually most
convenient: Given an open cover $X = U \cup V$ of a space~$X$, we want to
infer the homology of the latter from the homologies of $U$, $V$ and~$U\cap V$.
The existence of a link between these objects is guaranteed by the
\emph{Mayer-Vietoris sequence}, which in turn is a formal consequence
of the Eilenberg-Steenrod axioms\icite[Sec.~10.7]{b:tomdieck08},
in particular the \emph{Excision Axiom}.
For example, singular homology fulfills the latter, which is the content
of the classical \emph{Excision Theorem}%
\icite[Sec.~9.4]{b:tomdieck08}\mcite[Thm.~2.20]{b:hatcher02}.

It is known that \emph{bounded cohomology}~$\Hb^*$ (of spaces) and its
sibling \emph{$\lone$\-homology}~$\Hl_*$ do \emph{not} satisfy excision. For example,
$\Hl_k(S^1) \isom 0$ for $k\in\N[\geq1]$, but $\Hl_2(S^1 \wedge S^1)
\not\isom 0$\icite[Ex.~(2.8)]{p:loeh08}. One can, however, ask the question
whether we can somehow enforce those functors to be excisive. In other words,
can we find a best approximation to those functors (in some precise formal
sense) that \emph{is} excisive?

To answer this question in at least one possible way, we use the abstract and
broadly applicable notion of excisive functors in the sense of Goodwillie
calculus. The natural framework for this abstract setting is the language of
\oo\-categories, which we will also use where appropriate. We show:
\begin{introMainTheorem}
    The best excisive approximation to the $\lone$\-chain complex functor
    $\Cl\colon\Top_*\to\ChR$ from the right is trivial (i.e. mapping everything
    to~$0$).
\end{introMainTheorem}
The precise statement and proof will be given below as \cref{thm:l1approx}. In order
to import the 1-categorical $\lone$\-chain complex functor to the world of
\oo\-categories, we use the localization machinery from Cisinski's
book\icite{b:cisinski19}, see \cref{rem:localization}.

\introParagraph{Context: \pdfLOne-homology and bounded cohomology}
A basic object in algebraic topology is the \emph{singular chain complex~$C(X,A)$ with real
coefficients} of a pair of topological spaces~$(X,A)$. Whereas, classically, we
take its homology, or we dualize and take cohomology (yielding \emph{singular
(co)homology}), we can also obtain a more functional analytic theory: First, we
equip $C_n(X)$ with the \emph{$\lone$\-norm} associated to the canonical basis,
i.e., we set
\[ \lonenorm[\Big]{ \sum\nolimits_{j=0}^N a_j\,\sigma_j } \defeq
    \sum\nolimits_{j=0}^N \mkern1.5mu\abs{a_j} \quad\in\R[\geq0]
\]
for reduced chains in~$C_n(X)$, and $C_n(X,A)$ with the quotient norm. The
boundary operator of~$C(X,A)$ is bounded with respect to the induced operator
norms, hence we can take the norm-completion of the whole complex and obtain
the \emph{$\lone$\-chain complex $\Cl(X,A)$}.  Its homology is the
\emph{$\lone$\-homology of~$(X,A)$}, the topological dual of (either $C(X,A)$
or) $\Cl(X,A)$ is the \emph{bounded cochain complex~$\Cb(X,A)$ of~$(X,A)$}, and
cohomology of the latter is the \emph{bounded cohomology of~$(X,A)$}.
For details and (basic) properties, we refer the reader to the literature%
\icite[Sec.~3.1]{p:loeh08}%
\mcite[Introduction and Ch.~2]{pp:loeh07}.

The prime example of an invariant defined in terms of~$\lonenorm{\blank}$ is the
\emph{simplicial volume}\icite{web:map:simplicialvolume}, which was introduced
by Gromov\icite{p:gromov82} and measures how difficult it is to build the
fundamental class of a manifold out of singular simplices with real
coefficients.
It is linked to Riemannian geometry\icite{p:gromov82} and -- except for a few
cases -- very hard to compute. To make structural statements about simplicial
volume, the theory of bounded cohomology and $\lone$\-homology has been
indispensable so far.

Bounded cohomology also has various connections to other mathematical areas
(some of which are summarized in Monod's nice \enquote{invitation to bounded
cohomology}\icite{p:monod06}), so in view of the (albeit imperfect)
dualism between $\lone$\-homology and bounded cohomology, it makes sense to
obtain a more throughout understanding of the former as well.
One should also point out, that some computations in bounded cohomology
\emph{cannot} be dualized to $\lone$-homology. For example, certain
transfer maps can only be constructed in the cohomological
setting\icite[Caveats 5.8 and~5.6]{p:loeh_Isosl1}.

To get the appropriate $\lone$\-chain complex functor between \oo\-categories via
the universal property of the localization \pcref{rem:localization},
it is important to recognize $\Cl$ as a \emph{relative} functor
\pcref{def:relcat}, i.e., mapping weak (homotopy) equivalences of topological
spaces to quasi-isomorphisms of chain complexes. This follows from the
fact that $\Cb$ is a relative functor\icite[6.4~Cor.]{pp:ivanov17} and the
translation principle by Löh\icite[Cor.~5.1]{p:loeh08}. (Note, that Ivanov's
result is (maybe for convenience) only stated for maps between
\emph{path-connected} spaces, but his proof of the theorem preceding the
corollary can be taken over verbatim if one drops the path-connectedness
assumptions and replaces the \enquote{i.e.}-part by the more general definition
of \enquote{$k$\-equivalence} involving all base points and~$\pi_0$; see the
literature\icite[Sec.~6.7]{b:tomdieck08}\mcite[Ch.~4]{b:hatcher02}.)

\introParagraph{Variations of the main theorem}
By taking opposite \oo\-categories appropriately, basically the same proof
as for \cref{thm:l1approx} shows that the best excisive approximation to
the bounded cochain complex functor $\Top_*\op \to \ChR$ \emph{from the left}
is also trivial. (One must be careful with the terminology, though: in the
context of contravariant functors, one has to read \cref{def:excisive} as
\enquote{pushout squares \emph{in the original category}}, i.e. before
applying~${}\op$.)

Dually to our investigation, one might also ask for an excisive
approximation of the $\lone$\-chain complex functor \emph{from the left}
and of the bounded cochain complex functor \emph{from the right}.
It is shown by Raptis\icite{pp:raptis21}, that the latter is exactly the
comparison map from the bounded to the singular cochain
complex\icite[Sec.~2.5]{pp:raptis21}, and that the arguments also
dualize to the comparison map in the case of
$\lone$\-homology\icite[Sec.~3.3]{pp:raptis21}.

As for approximations to bounded cohomology and $\lone$-homology
\emph{of groups}, the setup of this article does not seem to be
very useful: One fundamental problem is, that pushouts
are well suited for decompositions of topological spaces,
but not in the case of groups, where they only correspond to
amalgamated free products.

Looking deeper into Goodwillie calculus reveals, that a functor being
excisive is only the first of a whole sequence of (increasingly weaker)
conditions on a functor. In generalization of the main theorem of this
article, the author showed, that the $n$\-excisive
approximation to the $\lone$\-chain complex functor is trivial for
\emph{all} $n\in\N$. The details, which become a lot more technical,
can be found in the author's PhD~thesis\icite[Sec.~6.3]{pp:witzig_phd}.

\introParagraph{Notations and conventions} \label{sec:conventions}%
As we want to work with functor categories, it is necessary to avoid set
theoretic issues. 
The reader, who does usually not worry about size issues, may find
an explanation of the problem and strategies for its solution in
\cref{rem:sizeissues}.

If one intends to do general treatment of \oo\-category theory itself,
the best way to solve this, is to parametrize each and every statement
over the meaning of \enquote{small} and \enquote{large}. This is, what
happens in (the second half of) Cisinski's book\icite{b:catLR}, and it
has the benefit of greatest flexibility.
However, as we mainly want to use \oo\-categories to treat explicit
examples and to survey some aspects of the theory, this approach would add
too much complexity and most certainly confusion among novice readers.
For these reasons, we will choose a size hierarchy now, into which all
objects of this article fit.

We fix an increasing chain of four Grothendieck universes,
whose elements we refer to as \emph{small}, \emph{large}, \emph{very large}
and \emph{huge}. We occasionally use the term \emph{class} for a subset of
the small universe; hence, each class is a large set.
We then denote the category of
\begin{itemize}[nosep]
\item  small topological spaces by~$\Top$,
\item  large sets by~$\Set$, and
\item  large, but locally small 1-categories by~$\Cat$.
\end{itemize}
All the usual categories of algebraic theories (rings, modules,~\dots) live
at the same stage as~$\Top$, and are objects of~$\Cat$.
Rings are always unital. For a ring~$k$, we denote the
category of chain complexes of left $k$-modules by~$\Chk$.

For a category~$A$ and objects $x,y$ of~$A$, we let $\Mor_A(x,y)$ denote
the set of morphisms $x\to y$; more generally (and if $A$~is locally large),
$\Mor_A$ denotes the corresponding functor $A\op\cross A\to\Set$, where
$A\op$ denotes the opposite category of~$A$.

When used as an \oo\-category,\label{par:localizationConvention} the symbols $\Top$ and~$\Chk$ need to
be re-interpreted as the localization \pcref{rem:localization} of their
1-categorical counterpart with respect to weak homotopy equivalences and
quasi-isomorphisms, respectively. We do \emph{not} employ the common
practice of using the nerve functor \pcref{def:1nerve} implicitly.

\introParagraph{Organization of this article}
In \cref{sec:approximation} we lay out the results that we are able to obtain
via the Goodwillie calculus framework. \cref{sec:glimpse} contains a very
brief introduction to \oo\-categories in the sense of Lurie\icite{b:HTT} and
tries to provide some insight on a conceptual level to a few aspects of the
theory; we would like to provide as much \enquote{glue} as needed for the
\enquote{1-categorical reader} who is usually not working with \oo\-categories.
In \cref{sec:singular} we provide a full proof of the fact that the
singular chain complex functor (on the level of \oo\-categories) is excisive in
our abstract sense \pcref{def:excisive}. Finally, in \cref{sec:classical}, we
show how to obtain Mayer-Vietoris sequences from \oo\-categorical pullback
squares, and in particular how to obtain the classical Mayer-Vietoris sequence
for singular homology in this way.

\introParagraph{How to read this article}
Readers with at least some prior knowledge of \oo\-category theory can start at
\cref{sec:approximation}, might want to skip most of \cref{sec:glimpse}, and
then (optionally) continue with \cref{sec:singular} and \cref{sec:classical}.
The reader, who is proficient in model category theory, can skip
\cref{sec:modelcatbits}.

For the reader without \oo\-categorical background, we recommend to begin with
\cref{sec:glimpse}, ignoring \cref{sec:othernames} on the first
visit. Depending on the familiarity with modern homotopy theory, one might
also want to read \cref{sec:modelcatbits} to get a feeling for the nature
of homotopical situations. Afterwards, the remaining parts can be consumed
as desired, e.g. linearly from \cref{sec:approximation}.

\section{Excisive approximation} \label{sec:approximation}%
In this section, we investigate the excisive approximation of
$\lone$\-homology -- or more precisely, of the $\lone$\-chain complex functor.
First, we introduce a concept of \emph{excision} whose formulation applies to
\emph{any} functor between \oo\-categories \pcref{def:excisive},
we quote a theorem by Lurie ensuring the existence of such an approximation
\pcref{thm:P1}, and we explain in what sense such an approximation is universal
\pcref{rem:bestapprox}.
We then use these notions to show that the excisive approximation
of~$\Cl\colon\Top_*\to\ChR$, i.e. the $\lone$\-chain complex functor on pointed
spaces, is trivial \pcref{thm:l1approx}.
An explicit formula by Lurie makes the argument short and accessible.

As this section's setup is entirely \oo\-categorical, the word
\enquote{category} will always mean \oo\-category in the sense of
Lurie\icite{b:HTT} \pcref{def:infcat}
and we will use $1$\-category to refer to categories
in the sense of traditional category theory.
Likewise, we will deal with other concepts such as functors, limits, etc.,
i.e. \enquote{functor} will mean a functor of \oo\-categories,
\enquote{(co)limit}/\enquote{pushout}/\enquote{pullback} will mean the
\oo\-categorical notion, etc.
In particular, we remind the reader of our convention that $\Top$ and~$\ChR$
become \oo\-categories via localization (page~\pageref{par:localizationConvention}).
We highlight these choices by the following:
\begin{thConvention}
    In this section, and this section only,
    the term \emph{category} refers to \emph{\oo-category}.
\end{thConvention}

\subsection{Abstract excision}
First of all, we introduce the concept of an excisive functor of \oo\-categories:
\begin{thDef}[excisive functor] \label{def:excisive}%
    Let $C$ be a category with pushouts and let $F\colon C\to D$ be a functor.
    Then $F$ is \emph{excisive} if it sends pushout squares to pullback squares,
    i.e., if the following holds: Given a pushout square
    \[ \begin{tikzcd}
        W \ar[r, "f" midway] \ar[d, "g"'] & U \ar[d, "g'"]
        \\
        V \ar[r, "f'" midway] & X
        \end{tikzcd}
    \]
    in~$C$, the diagram
    \[ \begin{tikzcd}
        F(W) \ar[r, "F(f)"] \ar[d, "F(g)"'] & F(U) \ar[d, "F(g')"] \\
        F(V) \ar[r, "F(f')"] & F(X)
        \end{tikzcd}
    \]
    is a pullback square in~$D$.%
    \envEnds
\end{thDef}
Note again, that this must be read in the \oo\-categorical setting. In
particular, pushout/\hspace{0pt}pullback squares do \emph{not} have to commute \enquote{on
the nose} but only \enquote{up to homotopy} and the latter is part of the
data(!), see also \cref{ex:commutativesquares} and \cref{rem:sloppynotation}.

\begin{thExample} \label{ex:nonexcisivefunctor}%
    The constant functor $\Top\to\Top$ that sends everything to a point is
    excisive. On the other hand, the identity functor on~$\Top$ is
    \emph{not} excisive: for example, there is a pushout square
    \[ \begin{tikzcd}[column sep=1.4cm, row sep=0.9cm]
            S^0 \ar[r, "\text{inclusion}"]\ar[d]
            & D^1\ar[d, "\text{\parbox{1.4cm}{\raggedleft quotient\\[-0.5\baselineskip]map}}"']
            \\
            \point\ar[r]
            & D^1/S^0
        \end{tikzcd}
    \]
    that is \emph{not} a pullback square. We will get back to this in
    \cref{ex:nonexcisiveproof}.
\end{thExample}

As it turns out, given a functor between categories with enough limits,
it always has a \enquote{best approximation} by an excisive functor in a precise
sense. The following definition captures the niceness assumptions that we have
to impose on the codomain category:
\begin{thDef} \label{def:differentiable}%
    Let $C$ be a category. Then $C$ is \emph{differentiable} if it admits finite
    limits and sequential colimits and if the formation of the latter commutes
    with the former.
    
    More explicitly, this means:
    every finite\icitekerodon{0130} diagram $K\to C$ admits a limit,
    every diagram $\nerve(\N)\to C$ admits a colimit in~$C$,
    and the functor $\colim\colon\Fun(\nerve(\N),C)\to C$ commutes with finite limits.
    Here, $\N$~is viewed as the preorder category
    (\crefitem{def:simplicialset}{:preordercat}) of the usual ordering on the
    natural numbers and $\nerve$~is the nerve functor \pcref{def:1nerve} from
    $1$\-categories to \oo\-categories.
\end{thDef}

\begin{thTheorem}[\titlecite{b:HA}{Thm.~6.1.1.10 and Ex.~6.1.1.28}] \label{thm:P1}%
    Let $C$ be a category with a terminal object and finite colimits and let
    $D$ be a differentiable category. Let $\Exc(C,D)$ be the full subcategory of
    the functor category $\Fun(C,D)$, spanned by the excisive functors. Then the
    inclusion functor $\Exc(C,D)\incl\Fun(C,D)$ has a left adjoint
    $P_1\colon\Fun(C,D)\to\Exc(C,D)$.
    
    Furthermore, if $F\colon C\to D$ is \emph{reduced}, which means that $F$
    maps every terminal object of~$C$ to a terminal object of~$D$,
    and if $D$~has a zero object, the following holds:
    \[ P_1F \equiv \colim_{n\in\N} \Omega^n\after F\after\Sigma^n \]
    where $\Omega$ and $\Sigma$ are the loop and suspension functor on $D$
    and~$C$, respectively.%
    \envEnds
\end{thTheorem}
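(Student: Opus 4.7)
The plan is to follow Lurie's strategy in \cite{b:HA}, so my sketch reconstructs the main ingredients rather than giving a fully independent argument. First, reduce to the case of reduced functors: any $F\in\Fun(C,D)$ admits a canonical natural transformation to the constant functor at $F(\point)$, whose fiber is reduced, and the construction of $P_1$ is compatible with such a decomposition. Hence it suffices to establish the explicit formula for reduced $F$; the general adjunction follows formally from this special case together with the observation that constant functors are already excisive.

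So assume $F$ is reduced and define an endofunctor $T\colon\Fun(C,D)\to\Fun(C,D)$ by $TF(X) \defeq \Omega F(\Sigma X)$. For each $X\in C$, the defining pushout square
\[ \xymatrix@=0.7cm{X \ar[r]\ar[d] & \point \ar[d] \\ \point \ar[r] & \Sigma X} \]
is sent by $F$ to a square whose two terminal corners are preserved by reducedness, so the universal property of the pullback $\Omega F(\Sigma X) = \point\times_{F(\Sigma X)}\point$ produces a natural comparison map $\tau_F\colon F \to TF$. Iterating and taking the pointwise sequential colimit, set $P_1 F \defeq \colim_n T^n F$; this exists in $D$ pointwise, and noting that $T^n F(X) = \Omega^n F(\Sigma^n X)$ recovers the formula in the statement.

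The key assertions to verify are then that $P_1 F$ is excisive and that the canonical map $\tau_\infty\colon F\to P_1 F$ serves as the unit of the desired adjunction. For excisiveness, one applies $\Omega^n F \Sigma^n$ to a given pushout square in $C$ and observes that the failure of the resulting square to be cartesian becomes progressively \emph{more connected} as $n$ grows -- an abstract Freudenthal/Blakers--Massey type phenomenon phrased in the \oo-categorial setting -- so passing to the colimit eliminates this failure. Here differentiability of $D$ is essential twice over: it guarantees the sequential colimit exists, and it lets one commute the colimit past the finite pullback limit when identifying the target square as a pullback. Once $P_1 F$ is known to be excisive, the adjunction follows by showing that post-composition with $\tau_F$ induces an equivalence $\Map(TF,G)\xrightarrow{\simeq}\Map(F,G)$ for every excisive $G$ (essentially because excisive functors are fixed points of $T$ up to canonical equivalence), and passing to the colimit then gives $\Map(P_1 F,G)\simeq\Map(F,G)$, which is the defining property of the left adjoint.

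The main obstacle is the excisiveness of $P_1 F$: the abstract connectivity input that drives the argument is delicate, and it is precisely at this point that the \oo-categorial language genuinely pays off over its 1-categorical analogue, where one does not even have a good notion of homotopy pullback in a general codomain. For this reason I would import the result from \cite{b:HA} rather than reproduce the full argument, using the above sketch only as a guide for the shape of the statement and the role of each hypothesis.
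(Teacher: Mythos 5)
The paper offers no proof of \cref{thm:P1} at all---it is imported verbatim from Lurie\icite{b:HA}---so your decision to defer to the citation is exactly what the paper does, and your reconstruction of the construction (the comparison map $F\to\Omega\after F\after\Sigma$ obtained by applying a reduced~$F$ to the suspension pushout square, iterating, and taking the pointwise sequential colimit, with differentiability of~$D$ used both for the existence of that colimit and for commuting it past finite limits) correctly reproduces the shape of Lurie's argument in the reduced case.

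Two cautions about the sketch, should you ever want to flesh it out. First, the mechanism you propose for the excisiveness of $P_1F$---that the failure of cartesianness becomes \enquote{progressively more connected} in a Freudenthal/Blakers--Massey fashion---is not how the cited proof works and could not work at this level of generality: a differentiable category~$D$ (for instance the stable category~$\ChR$, to which the paper applies the theorem) carries no useful notion of connectivity, and Blakers--Massey is a special feature of spaces. The actual engine is a purely formal factorization lemma\icite[Lem.~6.1.1.26]{b:HA}: for a pushout square as in \cref{def:excisive}, the canonical map $F(U)\times_{F(X)}F(V)\to T_1F(U)\times_{T_1F(X)}T_1F(V)$ factors through $T_1F(W)$, and iterating this factorization forces the comparison map $P_1F(W)\to P_1F(U)\times_{P_1F(X)}P_1F(V)$ to become an equivalence in the sequential colimit (here differentiability enters again, to commute that colimit with the pullback on the right). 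Second, Lurie's functor is $T_1F(X)\simeq F(\Cone(X))\times_{F(\Sigma X)}F(\Cone(X))$, which agrees with your $\Omega\after F\after\Sigma$ only when $F$ is reduced; with the cone version every excisive $G$, reduced or not, is a fixed point of~$T_1$, so the adjunction argument works against all of $\Exc(C,D)$ directly. With your~$T$, a constant functor at a non-terminal object is excisive but not fixed by~$T$, so the splitting into a constant and a reduced part would have to be performed on the test objects~$G$ as well as on~$F$---a repairable but genuine gap in the last step of your sketch.
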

It should be noted, that the functor categories that appear in the theorem are
usually not locally small anymore. Though the existence of an adjoint as well as
the statement that two given functors are adjoint to each other do \emph{not}
depend on the ambient universe\icite[Rem.~6.1.12 and Thm.~6.1.23\,(v)]{b:catLR},
the use of mapping space functors (\enquote{Hom space} in Cisinski's
book\icite[Sec.~5.8]{b:catLR}) will be convenient in the following remark.
In order to meet the smallness assumptions for this, it is necessary to view
the functor categories from our very large universe. For more discussion of mapping
spaces, see \cref{sec:Map} (with the small caveat, that our current shift of
universes implies that the targets of mapping space functors in
\cref{rem:bestapprox} are not~$\Top$, but the category of \emph{large}
topological spaces).

For any functor $F\colon C\to D$, the functor $P_1F\colon C\to D$ can be seen
as a best approximation of~$F$ by an excisive functor from the right, as we
shall now explain:
\begin{thRemark}[best approximation through adjunction] \label{rem:bestapprox}%
    One way of making the term \emph{best approximation} precise in a categorical
    setting is by so-called \emph{universal arrows}\icite[Sec.~III.1]{b:maclane98},
    i.e. by a certain mapping property:

    Let $C'$ be a full subcategory of some category~$C$ and let $X$ be an object
    of~$C$ that we want to approximate by an object of~$C'$. For example,
    $C$~could be a functor category and $C'$~could be its full subcategory of
    excisive functors. Then a \emph{best approximation of~$X$ by an object
    of~$C'$ from the right} consists of an object $Y$ of~$C'$ and a morphism
    $f\colon X\to Y$ in~$C$ with the following property:
    for every other morphism $f'\colon X\to Y'$ to an object of~$C'$, there
    exists an (up to homotopy) unique morphism $g\colon Y\to Y'$ such that
    \[ \begin{tikzcd}
        X \ar[r, "f"] \ar[rd, "f'"'] & Y \ar[d, "g"] \\ & Y'
        \end{tikzcd}
    \]
    commutes. Intuitively, we search for the \enquote{biggest quotient} of~$X$
    that has the desired properties.
    Of course, the whole paragraph could be dualized, by which we would obtain
    approximations \emph{from the left}, i.e. \enquote{biggest subobjects}.

    It is a classical 1-categorical fact\icite[Thm.~IV.1.1]{b:maclane98}
    that an adjunction gives rise to universal arrows. We will sketch the
    argument for our situation:
    \newcommand{\FUN}{\Fun}\newcommand{\EXC}{\Exc}%
    \newcommand{\MapF}{\Map_{\FUN}}\newcommand{\MapE}{\Map_{\Exc}}%
    \newcommand{\PF}{P\!F}%
    Assume that we have the inclusion functor $i\colon\EXC\incl\FUN$ of a full
    subcategory and that it has a left adjoint~$P\colon\FUN\to\EXC$.
    Using mapping space functors, one can define adjunctions in \oo\-categories
    analogously to the 1-categorical case\icite[Def.~6.1.3]{b:cisinski19}, i.e.
    we have an invertible natural transformation
    \[ \MapF(\blank, i(\blank)) \overset{c}\longto \MapE(P(\blank), \blank)
    . \]
    Here, invertibility means, that each component of~$c$ is an invertible
    morphism\icite[Def. 1.6.10 and~1.5.1]{b:catLR}, or equivalently, that~$c$
    is an invertible morphism in the corresponding functor
    category\icite[Cor.~3.5.12]{b:catLR}.

    Now let $F$ be an object of~$\FUN$, choose an inverse of~$c$ and let
    $\eta \defeq c\inv_{F,\PF}(\id_{\PF}) \in\MapF(F,i\PF)$. In order to show that
    this morphism
    \[ F \overset{\eta}\longto \PF \]
    is universal in the above sense, let $P'$ be another object of~$\EXC$, let
    $(\eta'\colon F\to P') \in\MapF(F,iP')$ be a morphism and set
    $\theta \defeq c_{F,P'}(\eta')$. Then the triangle
    \[ \begin{tikzcd}
        F \ar[r, "\eta"] \ar[rd, "\eta'"'] & \PF \ar[d, "\theta"] \\ & P'
        \end{tikzcd}
    \]
    commutes, which can be seen by chasing~$\eta$ through the following diagram:
    \[ \begin{tikzcd}[column sep=3cm, row sep=20pt]
            \MapF(F,\PF) \ar[r, "{\MapF(\id_F, \theta)}"] \ar[d, "{c_{F,\PF}}"]
            & \MapF(F,\PF) \ar[d, "{c_{F,P'}}"]
            \\ \MapE(\PF,\PF) \ar[r, "{\MapE(\id_{\PF}, \theta)}"]
            & \MapE(\PF, P') \rlap{.}
            \envEndsHere
        \end{tikzcd}
    \]
\end{thRemark}

\subsection{Excisive approximation of the \pdfLOne-chain complex functor}
\label{sec:approxchaincomplexfunctor}
\label{sec:Approxl1ChainComplexFunctor}%
We can now apply \cref{thm:P1} to $\lone$\-homology, but we have to be a little
bit careful: After a bit of thought, it is quite apparent that the classical
\emph{excision axiom} for a homology theory in the sense of Eilenberg and
Steenrod is \emph{not} a property of a single functor~$H_n$ for a fixed~$n$,
but of a whole sequence of functors: after all, we know\icite[cf. Ch.~7, esp.
7.34 and 7.35]{b:switzer02}
that excision can equivalently be recast in the form of a Mayer-Vietoris
sequence running through all \enquote{dimensions} of a homology theory.
(In the usual form of the Eilenberg-Steenrod axioms the excision axiom
seems to depend only on a single functor at a time, but this is deceptive because it
uses \emph{relative} homology which is tightly linked to its absolute version in
different(!) dimensions by the long exact sequence of a pair.)

So the upshot of this discussion is that we should not try to approach each of the
functors~$\Hl_n$ individually, but instead we have to look at the corresponding
functor $\Top\to\ChR$ to chain complexes. We will see later \pcref{sec:singular}
that indeed the usual singular chain complex functor \emph{is} excisive in our
sense.
In order to apply the formula of \cref{thm:P1}, we need a reduced functor, but
$\Cl(\point)$ is \emph{not} a contractible chain complex. This is why we
consider pointed spaces and the functor $\Cl\colon\Top_*\to\ChR$.
We then obtain:

\begin{thTheorem}[excisive approximation of \pdfLOne-homology]
    \label{thm:l1approx}%
    There exists a best excisive approximation of~$\Cl\colon\Top_*\to\ChR$ from the right,
    i.e. an excisive functor $P\colon\Top_*\to\ChR$ and a natural transformation
    $\eta\colon\Cl\to P$,
    such that for every other excisive functor $P'\colon\Top_*\to\ChR$ and natural
    transformation $\eta'\colon\Cl\to P'$
    there exists a natural transformation $\theta\colon P\to P'$ (unique up to
    homotopy) that makes the triangle
    \[ \begin{tikzcd}
        \Cl \ar[r, "\eta"] \ar[rd, "\eta'"'] & P \ar[d, "\theta"] \\ & P'
        \end{tikzcd}
    \]
    commutative.
    This best approximation is trivial, i.e. $P(X) \equiv 0$ for all pointed
    spaces~$X$.
\end{thTheorem}

\begin{proof}
    The category~$\Top_*$ has all small limits and colimits. Furthermore, $\ChR$~is
    stable\icite[Prop.~1.3.5.9 (with Prop.~1.3.5.15)]{b:HA} and
    has all small colimits, so it is in particular
    differentiable\icite[Ex.~6.1.1.7]{b:HA}.
    Hence, the prerequisites of \cref{thm:P1} are fulfilled and the functors
    $P_1\colon\Fun(\Top_*,\ChR)\to\Exc(\Top_*,\ChR)$ and
    $P_1(\Cl)\colon\Top_*\to\ChR$ exist. Since $P_1$ is left adjoint to the
    inclusion functor $i\colon\Exc(\Top_*,\ChR)\hookrightarrow\Fun(\Top_*,\ChR)$, we also
    know (see \cref{rem:bestapprox}) that the unit morphism
    $F = \Id(F)\to (i\after P_1)(F)$ of the adjunction
    provides a universal arrow in the sense described in the
    claim.
    
    Furthermore, since we consider $\Cl$ relative to the base point, the
    second part of \cref{thm:P1} is applicable. For a pointed space~$X$
    this gives us
    \[ P_1(\Cl)(X) \equiv \colim_n \Omega^n\bigl( \Cl (\Sigma^n X) \bigr)  ,\]
    where $\Sigma\colon\Top_*\to\Top_*$ is the (unreduced) suspension functor and
    $\Omega\colon\ChR\to\ChR$ is the loop functor on chain complexes. But since
    $\Sigma^nX$ is simply-connected for $n\in\N[\geq2]$ and we know that $\Cl$
    vanishes on path-connected spaces with amenable (so in particular trivial) fundamental
    group\icite[8.4~Thm.]{pp:ivanov17}\mcite[Cor.~5.1]{p:loeh08},
    we get $P_1(\Cl)(X) \equiv \colim_{n\in\N[\geq2]} \Omega^n(0)$.
    Consider the fact that the loop functor~$\Omega$ on chain complexes can
    be realized by just shifting a complex down by one (i.e. $(\Omega C)_n =
    C_{n+1}$). Using this, it follows that we have
    \[ P_1(\Cl)(X) \equiv \colim_{n\in\N[\geq2]} 0 \equiv 0
    .
    \qedplacement{\qedafterdisplay}{\qedhere} \]
\end{proof}

\section{A brief glimpse of infinity} \label{sec:glimpse}%
We now take the time to discuss some aspects of \oo\-category theory for readers
who mostly or exclusively work in a $1$-categorical setting.
In this survey section, we will not include any proofs, but we give lots of
pointers to the literature for further reading.
We hope that the reader who is not familiar with \oo\-category theory will
find enough insights as to get a comfortable feeling about the statements
and arguments of the other sections.

In \nameCrefs{sec:nutshell}~\ref{sec:nutshell} and~\ref{sec:workingin},
we give a rigorous definition of the term \emph{\oo\-category}
and explain how it relates to the concept of higher morphisms.
\cref{sec:importing} is dedicated to the question how interesting
\oo\-categories can be constructed from 1-categorical data,
and in \cref{sec:othernames} we note some caveats regrading the
nomenclature around the term \enquote{\oo\-category}.

We presume some familiarity with basic category theory,
e.g. as in the first two chapters of Mac Lane's book%
\icite[Sec.~I.1--4, I.8, II.1--4]{b:maclane98}. Furthermore, we note:
\begin{thConvention}
    Starting with this section, \emph{category} means \emph{1-category},
    and we will explicitly use \emph{\oo\-category} as in~\cref{def:infcat}.
\end{thConvention}

\subsection{From 1 to \pdfInfty{} in a nutshell} \label{sec:nutshell}%
Our first goal is to give a precise definition of \oo\-categories in terms of Lurie
and to observe that there's a formal way to turn 1-categories and functors into
\oo\-categorical ones. We refer the reader to the literature%
\icite{b:cisinski19}%
\mcite{pp:riehl20}%
\mcite{pp:groth15}%
\mcite{web:kerodon}%
\mcite{b:HTT}
for a more comprehensive treatment of the subject.

The underlying foundational object for this theory is the \emph{simplicial
set}, a generalization of (ordered) simplicial complexes, for which we give
a rigorous definition in the following but almost no illustration or background.
We recommend the beautiful survey article by Friedman\icite{p:friedman12} as an
introduction to the subject and the textbook by Goerss and
Jardine\icite{b:goerssjardine09} for further reading.

\begin{thDef}[simplicial set] \label{def:simplicialset}%
    \begin{enumerate}
        \item \label{def:simplicialset:preordercat}%
            Let $(X,\leq)$ be a preorder, i.e. $\leq$ is a transitive and
            reflexive relation on~$X$. We define the \emph{preorder category
            $\Pre(X,\leq)$} to have $X$ as objects and a unique morphism from
            $x$ to~$x'$ exactly if $x \leq x'$ (and no further morphisms).
        \item
            For $n\in\N$ let $\ordcat n \defeq \Pre(\setZeroto n, \leq)$.
            Let $\simplexcat$ be the full subcategory of~$\Cat$ on the objects
            $\{\ordcat n \Mid n\in\N\}$.
        \item\label{def:simplicialset:category}\label{def:simplicialset:indexnotation}%
            A \emph{simplicial set} is a functor $\simplexcat\op\to\Set$. The
            \emph{category~$\sSet$ of simplicial sets} is the functor category
            $\Fun(\simplexcat\op,\Set)$. Hence, a morphism of simplicial sets is
            a natural transformation of functors. For a simplicial set~$X$
            and $n\in\N$ one often uses the notation~$X_n \defeq X(\ordcat n)$.
        \item
            Let $X$ be a simplicial set. Then a \emph{simplicial subset} of~$X$
            is a simplicial set~$U$ such that for all $n\in\N$ we have
            $U_n\subset X_n$ and for all morphisms $f\colon [m]\to\ordcat n$
            in~$\simplexcat$ we have $U(f) = X(f)\restrict{U_n}$.
    \end{enumerate}
\end{thDef}

\begin{thRemark}[smaller simplicial sets]
    As a variant of~\crefitem{def:simplicialset}{:category}, we could
    replace $\Set$, the category of large sets, by the category of small
    sets. The resulting simplicial sets would then be suitable for
    modelling objects of~$\Top$ as in simplicial homotopy
    theory\icite{b:goerssjardine09}, but would be too small
    for our purposes.
\end{thRemark}

\begin{thRemark}[sets as simplicial sets]
    By an extension of a set~$U$ to a simplicial set, we mean a simplicial
    set~$X$ with $X_0 = U$.
    Such an extension always exists by virtue of the constant functor
    at~$U$.
\end{thRemark}

\begin{thRemark}[operations with simplicial sets] \label{rem:SmallestSimplicialSet}%
    Many operations one normally does with plain sets can be lifted to simplicial
    sets by just applying them \enquote{dimensionwise}. For instance, for two
    simplicial subsets $U,V$ of the same simplicial set, one can define
    $U \intersect V$ by $(U\intersect V)_n \defeq U_n \intersect V_n$ for all
    $n\in\N$ and restriction on morphisms. In particular the expression
    \enquote{smallest simplicial subset with some property} can be made precise
    by taking intersections as usual.
    
    The formal reason why this works, is the fact that limits and colimits in a
    functor category are just computed \enquote{pointwise} if
    they exist in the target category. Taking intersection is a pullback
    of sets, for example.
\end{thRemark}
With \cref{rem:SmallestSimplicialSet} in mind, we can now introduce some
fundamental simplicial sets:

\begin{thExamples}[standard simplex, horn] \label[thExample]{ex:simplicialsets}%
\hfill%
\begin{enumerate}[topsep={2.5pt plus 2pt minus 1pt}]
\item\label{ex:simplicialsets:cofacemap}%
    Let $n\in\N$.
    The simplicial set $\stdsimplex n \defeq \Mor_{\simplexcat}(\blank,\ordcat n)$
    is the \emph{standard $n$\-simplex}.
    For $n\geq1$ and $i\in\setZeroto n$ the unique functor
    $\coface{n-1}{i}\in\Mor_{\simplexcat}(\ordcat{n-1},\ordcat{n}) = (\stdsimplex n)_{n-1}$ with
    $\coface{n-1}{i}(\setZeroto{n-1}) = \setZeroto n\setminus\{i\}$
    is the \emph{$i$-th coface map}.
\item
    For $n\in\N$
    the smallest simplicial subset of~$\stdsimplex{n+1}$ that contains
    $\{\coface{n}{0},\dots,\coface{n}{n+1}\}\subset(\stdsimplex{n+1})_n$
    is the \emph{simplicial $n$\-sphere}.
\item
    Let $n\in\N[\geq1]$ and $i\in\setZeroto n$. The
    \emph{$(n,i)$\-horn~$\horn n i$} is the smallest simplicial subset of the
    standard $n$\-simplex that contains $\{\coface{n-1}{0},\dots,\coface{n-1}{n}\}
    \setminus\{\coface{n-1}{i}\} \subset (\stdsimplex n)_{n-1}$.
\end{enumerate}
\end{thExamples}
Informally speaking, the simplicial $(n-1)$\-sphere is obtained from
$\stdsimplex n$ by removing the \enquote{interior} of the $n$\-simplex;
and by further removing the $(n-1)$\-dimensional face opposite to the
$i$-th vertex we get the horn~$\horn n i$. The latter now plays the
main role in the definition of an \oo\-category:
\begin{thDef}[\pdfInfty-category] \label{def:infcat}%
    An \emph{\oo\-category} is a simplicial set~$C$ with the following
    property: For all $n\in\N[\geq1]$ and all $i\in\{1,\dots,n-1\}$ every
    morphism $\horn n i\to C$ factors over the inclusion
    $\horn n i\incl\stdsimplex n$.
    \[
        \begin{tikzcd}
            \horn n i \ar[r] \ar[d, inclusion, xshift=-2.5pt] & C \\
            \stdsimplex n \ar[ru, dashed, "\exists"']
        \envEndsHere
        \end{tikzcd}
    \]
\end{thDef}
A usual formulation of this property is the following:
In an \oo\-category all so-called \emph{inner horns} ($0<i<n$) have
a \emph{not} necessarily unique(!) \enquote{filler}.
Those fillers are witnesses of (higher dimensional) compositions in an \oo\-category
and the existence statement then ensures that a composition can always be found.
See \cref{rem:compositions} for the case of~$\horn 2 1$; for examples using
higher dimensional horns and the existence of their fillers, we refer to the
literature%
\icite[Sec.~1.2.3]{b:HTT}%
\mcite[proof of Lem.~1.6.2]{b:cisinski19}%
\mcitekerodonX{\kerodontag{003U}, \kerodontag{0041}}.
For nomenclature linking the different pieces of an \oo\-category to classical
categorical terms, see \cref{def:nomenclature}.
\begin{thExample}
    For all $n\in\N$, the standard $n$\-simplex is an \oo\-category. On the other hand,
    for $n\in\N[\geq1]$, the simplicial $n$\-sphere is \emph{not} an \oo\-category.
\end{thExample}

\begin{thRemark}[functor \pdfInfty-categories] \label{rem:internalhom}%
    For \oo\-categories $C$ and~$D$, a functor $C\to D$ is simply taken to be a
    morphism of simplicial sets.
    
    A certain construction\icitekerodon{0060}\mcite[Sec.~I.5]{b:goerssjardine09}
    turns the category of simplicial sets into a \emph{cartesian closed}
    category\icite[Sec.~IV.6]{b:maclane98}: it provides a simplicial set~$Y^X$
    that extends the set of morphisms between two simplicial sets $X$ and~$Y$,
    i.e. with $(Y^X)_0 = \Mor_{\sSet}(X,Y)$.  This is often called an
    \emph{internal Hom}, because it is again an object of the ambient category,
    or in this specific case the \emph{function complex from $X$ to~$Y$}.
    It can be shown\icite[Cor.~3.2.10]{b:cisinski19} that $Y^X$ is again an
    \oo\-category whenever $Y$~is an \oo\-category.
    In particular, this shows how functors $C\to D$ for fixed \oo\-categories
    $C$ and~$D$ give rise to a functor \oo\-category $\Fun(C,D)$.%
    \envEnds
\end{thRemark}

\begin{thRemark}[size issues] \label{rem:sizeissues}%
    As the reader has probably already noticed, the operation of taking
    \enquote{all functors $C\to D$} might or might not yield a well\-defined
    object, depending on the set theory that one chooses as a mathematical
    foundation. To be more specific: If one considers functors between categories
    \enquote{of the same size} (in terms of the \enquote{number} of objects and
    morphisms) one often obtains something that is \enquote{bigger} than that.
    On the other hand, if one imposes the correct size constraints on $C$
    and~$D$, the result can also be kept \enquote{small enough}.
    
    For example, suppose that we work within NBG (von Neumann, Bernays, Gödel)
    set theory. Then a category is usually defined to allow for a proper
    class of objects, but most often only morphism \emph{sets} between any two
    objects are permitted. If we consider such categories, it is well\-known that
    one is usually not permitted to form the functor category between two
    categories whose classes of objects are both proper classes. Though, if we
    consider a \emph{small} category~$C$, i.e. its class of objects is a set,
    and another category~$D$, the functor category $\Fun(C,D)$ is again a
    well\-defined category in the sense of this paragraph.
    
    Analogously, such considerations can be made for \oo\-categories when
    appropriate size restrictions are in place\icite[Cor.~5.7.7]{b:cisinski19}.
    Even then, however, the \enquote{two layers} of NBG set theory are not
    enough for the arguments in \cref{sec:approxchaincomplexfunctor}:
    there we want to look at the functor category $\Fun(\Top_*,\ChR)$
    where the size conditions are \emph{not} met. A usual way to circumvent this
    problem is the usage of so-called \emph{Grothendieck universes}. Roughly
    speaking, this means the following: A set $U$ is called a \emph{universe}
    if it satisfies certain closure properties like the \emph{power set axiom}:
    \[ \forall x\in U, \quad \{ y \Mid y \subset x \} \in U  .\]
    Then an additional axiom, the \emph{universe axiom}, is imposed:
    \[ \forall x \quad \exists\, U \text{\,universe}, \quad x\in U  .\]
    One can then just choose a universe that is big enough to support all the
    operations one wants to make, i.e. such that \enquote{enough layers
    exist}. As one may convince oneself, for the formulation of the statements
    in this article, it is actually enough to consider just \enquote{one more
    layer} on top of NBG, giving it a name like \enquote{superclasses} or
    \enquote{conglomerates} if one wants to maintain the hierarchy that is
    already in place.
    
    For further information, we refer the reader to the literature about size
    issues and possible solutions:
    A concise introduction to Grothendieck universes can be found in a paper by
    Low\icite{pp:low13};
    a gentle explanation of the \enquote{conglomerate solution} can be found in
    the category theory book by Ad\'amek, Herrlich and
    Strecker\icite[2~Foundations]{b:adamekherrlichstrecker90};
    an expository article by Shulman compares set theoretical foundations for
    category theory from a modern perspective\icite{pp:shulman08}.%
    \envEnds
\end{thRemark}
\begin{thRemark}[size levels of our objects]
    We say a category is of a specific size, if its set of objects and
    set of morphisms are both of that size.
    Under the explicit choices that we made in the introduction
    (page~\pageref{sec:conventions}), we have, among others:
    \begin{itemize}
    \item
        The sets~$\N,\Z,\R$ and all underlying sets of objects of~$\Top$
        are small, and $\simplexcat$~is a small category.
    \item
        The categories $\Top$, $\Ch_k$ and the category of small sets are
        large categories. However, they are locally small, i.e. the
        morphism set between every two objects is small.
    \item
        The categories $\Set$, $\Cat$ and~$\sSet$ are very large, but
        locally large.%
    \item
        The category of very large categories is huge.%
        \envEndsHere
    \end{itemize}
\end{thRemark}

Now that we have defined what an \oo\-category is, we would certainly want to
\enquote{import} all the 1-categories that we already know of into this setting.
Put differently, we really want that \oo\-categories provide a
generalization of 1-categories. This can be done via the \emph{nerve} functor:
\begin{thDef}[nerve of a 1-category] \label{def:1nerve}%
    Let $\iota\colon\simplexcat\incl\Cat$ denote the inclusion functor. Then the
    \emph{nerve functor $\nerve\colon\Cat\to\sSet$} arises from the functor
    \[ \Mor_{\Cat}(\iota\op\blank,\blank)\colon\simplexcat\op\cross\Cat\to\Set \]
    by \emph{currying}, i.e. $\nerve(X) = \Mor_{\Cat}(\iota\op\blank,X)$.%
    \envEnds%
\end{thDef}
More formally, this currying construction can be seen as an application of
the exponential law in the category of very large categories:
$\Fun(C\cross C', D) \isom \Fun(C',\Fun(C,D))$, analogously to the
exponential law for functions\icite[Exerc.~2 of Sec.~II.5]{b:maclane98}.
\begin{thExample}[standard simplex via nerve]
    For all $n\in\N$ we have $\stdsimplex n = \nerve(\ordcat n)$.
\end{thExample}
\begin{thProposition}[nerve is \pdfInfty-category%
    \titlecitesep%
    \titlecite{b:cisinski19}{Prop.~1.4.11, Ex.~1.5.3}%
    \titlecite{b:HTT}{Prop.~1.1.2.2}%
    \titlecitekerodon{002Z}%
    ]
    For every category~$C$, the nerve $\nerve(C)$ is an \oo\-category. Furthermore, the
    nerve functor~$\nerve$ is fully faithful, i.e. for all categories $C$ and~$D$
    the map $\Mor_{\Cat}(C,D) \ni F \mapsto \nerve(F) \in
    \Mor_{\sSet}(\nerve(C),\nerve(D))$
    is bijective.
\end{thProposition}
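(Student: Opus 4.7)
The plan is to prove the two parts separately: first that every inner horn in $\nerve(C)$ admits a filler, and then that the nerve functor is fully faithful. Throughout, the key technical input is the \emph{Segal-type} observation that, by the definition of $\simplexcat$ and the Yoneda lemma, an $n$-simplex of $\nerve(C)$ is literally a composable string $x_0 \to x_1 \to \cdots \to x_n$ in~$C$, uniquely determined by its sequence of vertices and edges together with the compositions provided by~$C$.

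First, for the horn-filling property, I would fix $n\in\N[\geq 1]$ and $0<i<n$ and analyze a map $\sigma\colon\horn{n}{i}\to\nerve(C)$. The case $n=1$ is vacuous, and for $n=2$, $i=1$ the horn $\horn{2}{1}$ provides two composable morphisms $f\colon x_0\to x_1$ and $g\colon x_1\to x_2$, whose unique filler is the $2$-simplex corresponding to the pair of morphisms $x_0\xrightarrow{f}x_1\xrightarrow{g}x_2$ in~$C$, witnessing the composition $g\after f$. For $n\geq 3$, I would observe that, since $\horn{n}{i}$ contains all faces $\coface{n-1}{j}$ for $j\neq i$ and $0<i<n$, in particular it contains faces carrying every vertex $x_0,\ldots,x_n$ and every edge $x_j\to x_{j+1}$; compatibility of $\sigma$ on overlaps then assembles a unique functor $[n]\to C$, i.e.\ a unique $n$-simplex of $\nerve(C)$ restricting to~$\sigma$. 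This gives the filler.

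Second, for faithfulness I would note that any functor $F\colon C\to D$ is determined by its values on objects and morphisms, which are exactly $\nerve(F)_0$ and~$\nerve(F)_1$. For fullness, given a morphism $\phi\colon\nerve(C)\to\nerve(D)$ of simplicial sets, I would \emph{define} $F\colon C\to D$ by $F\defeq\phi_0$ on objects and $F\defeq\phi_1$ on morphisms, and verify the axioms of a functor. Preservation of identities follows because the identity $\id_x$ is the degenerate $1$-simplex $s_0(x)$, and $\phi$ commutes with degeneracies. Preservation of composition uses the $2$-simplex $\tau$ of $\nerve(C)$ whose three edges are $f\colon x_0\to x_1$, $g\colon x_1\to x_2$, and $g\after f\colon x_0\to x_2$: applying $\phi_2$ yields a $2$-simplex of $\nerve(D)$ whose edges are $\phi_1(f)$, $\phi_1(g)$, and $\phi_1(g\after f)$, which by definition of $\nerve(D)_2 = \Mor_{\Cat}(\ordcat 2, D)$ forces $\phi_1(g\after f) = \phi_1(g)\after\phi_1(f)$.

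Finally, I would verify $\nerve(F)=\phi$ dimensionwise. They agree on $0$- and $1$-simplices by construction. For $n\geq 2$ they agree because both maps send an $n$-simplex (a composable string) to the composable string whose vertices and edges are the images under $\phi_0,\phi_1$; this is forced by naturality of $\phi$ with respect to the coface and codegeneracy maps of~$\simplexcat$, together with the fact that in $\nerve(D)$ an $n$-simplex is completely determined by its $1$-skeleton plus the composition data recorded in its $2$-faces. The only step requiring some care is the last assembly argument, but it is a direct unpacking of the definition of $\nerve$ once the functoriality of~$F$ is in place.
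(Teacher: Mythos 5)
Your argument is correct, but it is worth noting that the paper does not actually prove this proposition at all: it is stated as a quoted result, with the proof delegated entirely to the cited references (Cisinski, Prop.~1.4.11 and Ex.~1.5.3; Lurie, HTT Prop.~1.1.2.2; Kerodon 002Z). What you have written out is essentially the standard argument that those references give, resting on the single observation that an $n$-simplex of $\nerve(C)$ is a functor $\ordcat{n}\to C$ and hence is determined by its spine $x_0\to x_1\to\cdots\to x_n$. Your treatment of fully faithfulness is complete: faithfulness from degrees $0$ and $1$, preservation of identities from compatibility with degeneracies, preservation of composition from the image of the $2$-simplex witnessing $g\after f$, and the final identification $\nerve(F)=\phi$ from the fact that a simplex of $\nerve(D)$ is determined by its $1$-skeleton. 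The one place where your horn-filling argument is terser than it should be is the claim that ``compatibility of $\sigma$ on overlaps assembles a unique $n$-simplex \emph{restricting to}~$\sigma$.'' Building the candidate filler $\tau\colon\stdsimplex{n}\to\nerve(C)$ from the spine (which does lie in $\horn{n}{i}$ for inner horns and $n\geq 2$) is immediate; the actual content is checking $\tau\restrict{\coface{n-1}{j}}=\sigma\restrict{\coface{n-1}{j}}$ for each $j\neq i$. For $0<j<n$ the face $\coface{n-1}{j}$ has the non-principal edge $\{j-1,j+1\}$ in its own spine, so one must verify that $\sigma$ sends this edge to the composite of the two adjacent principal edges; this follows because the $2$-dimensional face on $\{j-1,j,j+1\}$ is contained in some face $\coface{n-1}{l}$ of the horn with $l\neq i$ (possible since $0<i<n$ and $n\geq 3$), and that face, being itself a functor $\ordcat{n-1}\to C$, forces the composite relation. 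Spelling this out (together with associativity in~$C$ to match it against~$\tau$) closes the only gap; everything else in your proposal is a direct unpacking of definitions and is fine. Your observation that the inner-horn filler is in fact \emph{unique} is a correct strengthening, not needed for the statement but characteristic of nerves of $1$-categories.
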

This means that whenever one embeds two 1-categories into the \oo\-world via
the nerve, there will be exactly \enquote{the same} functors between them as
there were before. So in this sense, \oo\-categories generalize
1-categories.

However, while it is nice that we have 1-categories as \oo\-categories now,
it's not an enhancement of the original 1-categories. Often, one
builds \oo\-categories by other means and then they contain some
homotopy theoretic information of the original 1-category.
In \cref{sec:importing} we will go into a few more details.

\subsection{Working in \pdfInfty-categories} \label{sec:workingin}%
Now that we know how to model \oo\-categories by simplicial sets, we can
set up some nomenclature to make \emph{working} in an \oo\-category more similar to
working in a 1-category.

\begin{thDef}[nomenclature for \pdfInfty-categories] \label{def:nomenclature}%
    Let $C$ be an \oo\-category and let us use the notations of
    \crefitem{def:simplicialset}{:indexnotation} and
    \crefitem{ex:simplicialsets}{:cofacemap}. Then
    \begin{itemize}
        \item \emph{objects of~$C$} are the elements of~$C_0$,
        \item \emph{morphisms of~$C$} are the elements of~$C_1$,
        \item \emph{2-morphisms of~$C$} are the elements of~$C_2$,
        \item and generally for $n\in\N$ the \emph{$n$\-morphisms of~$C$}
                are the elements of~$C_n$.
    \end{itemize}
    For two objects $x$ and~$y$ in~$C$, a morphism $x\to y$ is a morphism~$f$
    of~$C$ such that $C(\coface{0}{1})(f) = x$ and $C(\coface{0}{0})(f) = y$.
    For an object~$x$ of~$C$ the morphism $C(\ordcat 1\to\ordcat 0)(x)$ of~$C$
    is the \emph{identity morphism of~$x$}, where we simply write
    $\ordcat 1\to\ordcat 0$ for the unique such functor.%
    \envEnds%
\end{thDef}
Note that \enquote{morphism} and \enquote{1-morphism} mean exactly the same
thing, as do \enquote{object} and \enquote{0-morphism}, though the latter
is not used by all authors.
With this notation at hand, one can, at least superficially, start to use
an \oo\-category just as a 1-category. However, there is one big catch:
\begin{thRemark}[lack of composition map] \label{rem:compositions}%
    In an \oo\-category~$C$, there is usually no \enquote{composition map
    $\Mor(y,z)\cross\Mor(x,y)\to\Mor(x,z)$} like in 1-categories. Instead, given
    morphisms $f\colon x\to y$ and $g\colon y\to z$ there could be many possible
    compositions of $f$ and~$g$ -- and this is indeed one of the key notions in
    the generalization from 1-categories to \oo\-categories. More precisely,
    $f$ and~$g$ give rise to a map $\horn 2 1\to C$ of simplicial sets,
    and any extension $\sigma\colon\stdsimplex 2\to C$ to the standard simplex
    \[
        \tikzcdset{diagrams={cramped, column sep={1.2cm,between origins}, row sep=0.7cm}}
        \begin{tikzcd}
            & y \ar[dr, "g"] & \\
            x \ar[ur, "f"] & & z
        \end{tikzcd}
        \surround\qquad\rightsquigarrow
        \begin{tikzcd}
            & y \ar[dr, "g"] \ar[d, phantom, "{\scriptstyle|\,|\,|\,|}" description] & \\
            x \ar[ur, "f"] \ar[rr, dashed, "h"'] & {} & z
        \end{tikzcd}
    \]
    determines \emph{a} composition~$h$ of $f$ and~$g$, namely:
    \[ h = C(\coface{1}{1})\bigl( \sigma_{\ordcat 2}(\Id_{\ordcat 2}) \bigr)
        \,\in C_1
    . \]
    The existence of such an extension is guaranteed by the fact that $C$ is an
    \oo\-category, but there can (and usually will) be many! In the situation above,
    we will say that \emph{$\sigma$~witnesses that $h$ is a composition of $f$
    and~$g$}.
    
    A useful analogy is the following: When defining the concatenation of two
    compatible paths with common domain interval in a topological space,
    one has to subdivide the interval, which is usually done by splitting it in
    the middle. This might look most symmetric at first, but as soon
    as we consider three paths, in fact \emph{no} such subdivision will make the
    concatenation map associative. One way to enforce this, is passing to the
    homotopy category, but as mentioned below \pcref{rem:homotopycategories},
    this might be a bad idea. Instead, we could just declare that \emph{any}
    concatenation (regardless of the chosen interval subdivision) is a valid
    composition of the paths, i.e. we allow many different compositions of two
    paths.%
    \envEnds
\end{thRemark}
With this in mind, the next natural question is: How does the notion of
\emph{(commutative) diagrams}\icitekerodon{005H} enter \oo\-category theory?
Indeed, since we already know what a functor of \oo\-categories is,
we could say, analogously to the usual definition for 1-categories, that
a diagram in an \oo\-category~$C$ is a functor $C'\to C$ from some \oo\-category~$C'$
(the \enquote{shape of the diagram}) to~$C$.
Though this would suffice on a purely technical level%
\icite[Thm.~7.3.22]{b:catLR}\mcite[Prop.~4.2.3.14]{b:HTT}
it is more convenient to allow \enquote{shapes} that are not \oo\-categories
themselves:
\begin{thDef}[diagram in an \pdfInfty-category]
    Let $C$ be an \oo\-category and let $K$ be a simplicial set. A \emph{diagram
    in~$C$ indexed by~$K$} is a morphism of simplicial sets $K\to C$.
\end{thDef}
\begin{thExample}[diagrams indexed by simplices] \label{ex:yoneda}%
    Let $n\in\N$ and let $C$ be an \oo\-category. A diagram $\stdsimplex n\to C$ is
    equivalently an $n$\-morphism of~$C$; more precisely: the map
    \[ \Mor_{\sSet}(\stdsimplex n, C) \to C_n, \quad
        \sigma\mapsto \sigma_{\ordcat n}( \Id_{\ordcat n} )
    \]
    is a bijection. (This is an instance of the Yoneda
    lemma\icite[Sec.~III.2]{b:maclane98}.)
    It is common to identify $n$\-morphisms of~$C$ with diagrams
    $\stdsimplex n\to C$ via this bijection whenever it is convenient.
\end{thExample}
\begin{thExample}[commutative squares] \label{ex:commutativesquares}%
    Let $K \defeq \nerve(\ordcat 1\cross\ordcat 1)$ be the nerve of the product
    category $\ordcat 1\cross\ordcat 1$ (taken in~$\Cat$). More explicitly,
    the latter can be depicted as
    \[ \begin{tikzcd}
        (0,0) \ar[r, "f"]\ar[d, "f'"']\ar[rd, "h"] & (1,0) \ar[d, "g"] \\
        (0,1) \ar[r, "g'"'] & (1,1)
        \end{tikzcd}
    \]
    and its nerve has the obvious additional 2-simplices~$\sigma$ and~$\tau$
    \enquote{filling the two triangles}. Then a diagram $p\colon K\to C$
    amounts to the data of
    \begin{itemize}
        \item four objects $p( (0,0) )$, $p( (1,0) )$, $p( (0,1) )$ and
            $p( (1,1) )$,
        \item five morphisms $p(f)$, $p(g)$, $p(f')$, $p(g')$ and $p(h)$, and
        \item two 2-morphisms $p(\sigma)$ and $p(\tau)$
    \end{itemize}
    of~$C$, such that
    \begin{itemize}
        \item the morphisms have the correct source and target and
        \item the 2-morphisms $p(\sigma)$ and~$p(\tau)$ witness that $p(h)$ is a
            composition of $p(f)$ and~$p(g)$ as well as of $p(f')$ and~$p(g')$.
            (Here we apply \cref{ex:yoneda}.)
    \end{itemize}
    As an illustration, we consider the case that $C$~is an \oo\-category of
    topological spaces, e.g. as in \cref{rem:hcnerve} via the homotopy
    coherent nerve or as the localization of~$\Top$ at weak homotopy
    equivalences in the sense of~\cref{rem:localization}.
    Then in a diagram as above, $p(\sigma)$ and~$p(\tau)$ are homotopies
    $p(h) \equiv p(g)\after p(f)$ and $p(h) \equiv p(g')\after p(f')$.
    So in particular, every such diagram in~$C$ gives a 1-categorical commutative
    diagram in the homotopy category of topological spaces. But it is crucial to understand
    that the latter notion is just a \emph{property} of a 1-categorical diagram
    while an actual diagram~$K\to C$ carries the 2-morphisms $p(\sigma)$
    and~$p(\tau)$ as additional \emph{data}.%
    \envEnds%
\end{thExample}
\begin{thRemark}[homotopy coherence] \label{rem:htpycoherence}%
    The last example already shows that commutative diagrams in
    \oo\-categories are quite a bit more delicate to handle than in
    1-categories. Of course, more complex diagrams than commutative squares
    require even more data to be specified by a diagram. In particular, diagrams
    indexed by \enquote{infinite shapes}, e.g. $\nerve(\Pre(\N,\leq))$,
    may require chosen $n$\-morphisms in the target \oo\-category for
    \emph{all}~$n\in\N$ in a compatible way -- one also speaks of a
    \emph{coherent} choice of higher morphisms.
    For further reading about homotopy coherence, we recommend notes by
    Riehl\icite{pp:riehl18}, which also contain an
    example\icite[Ex.~I.3.4]{pp:riehl18} of a homotopy commutative diagram that
    cannot be made homotopy coherent.
\end{thRemark}

\begin{thRemark}[sloppy notation] \label{rem:sloppynotation}%
    As we have seen in \cref{ex:commutativesquares} (we will reuse its notation
    here), a commutative square in an \oo\-category needs to specify a
    \enquote{diagonal} morphism.
    Indeed, if one defines a commutative square in a 1-category~$C'$
    analogously as a functor $F\colon [1]\cross[1] \to C'$, one a priori has
    to define $F(h)$. But since compositions in 1-categories are
    unique, one easily sees that keeping this piece of data is \emph{redundant},
    i.e. inferable from $(F(f),F(g))$. So for such a 1-categorical commutative
    square, the restriction of~$F$ to the \enquote{outer} four morphisms
    uniquely determines~$F$ and this is why we usually don't mention the
    \enquote{diagonal} at all.
    
    However, this is not true anymore for diagrams in \oo\-categories! So
    when speaking of a commutative square
    \[ \begin{tikzcd} a \ar[r] \ar[d] & b \ar[d] \\ c \ar[r] & d \end{tikzcd} \]
    in an \oo\-category~$C$, one really means a diagram
    $p\colon K\to C$ where $p( (0,0) ) = a\to d = p( (1,1) )$ as well as
    $p(\sigma)$ and~$p(\tau)$ are not visible, although they are implicit data
    associated to the square.%
    \envEnds
\end{thRemark}

\begin{thOutlook}[(co)limits] \label{out:limits}%
    Having said what a diagram is, one would like to speak about limits and
    colimits of such. Unfortunately, we cannot fully define this notion here,
    because it requires a more technical insight into the theory of
    \oo\-categories and simplicial sets. To give an idea of why this is the
    case, we may try to transplant the usual picture of how one thinks about
    limits in 1-categories to the \oo\-categorical world: traditionally, one
    starts with the notion of a \emph{cone} on a diagram. While this is
    easy for 1-categorical, i.e. \enquote{1-dimensional} diagrams, one already
    has to reflect on the issue, how to define a cone on an
    \oo\-categorical, i.e \enquote{higher dimensional} simplicial set shaped
    diagram. However, it is indeed not difficult to find explicit
    formulae that define the cone of a simplicial set%
    \icitekerodonX{\kerodontag{0172}, \kerodontag{0177}},
    which in turn may be used to formulate what a cone on an \oo\-diagram is.
    
    Next, we would like to single out \emph{universal} cones. In 1-category
    theory, those are then called limits or limit cones of the original
    diagram, and the universal property roughly says that \emph{every} cone
    factors through such a limit cone. The latter property, though, cannot
    immediately be transferred to \oo\-cones for several reasons: first,
    remember that we do not have unique compositions of morphisms
    \pcref{rem:compositions}, and second, one would have to take higher
    simplices into account.
    
    To solve this problem, one arranges for the \oo\-cones on a given diagram
    to be the objects of a certain \oo\-category. The latter then includes all the
    information about the higher morphisms and certain objects of this \oo\-category
    will then be called universal \oo\-cones.
    
    All of this applies to cones, yielding \emph{limits}, as well as dually to
    cocones, yielding \emph{colimits}. In the first case, the cone point is the
    initial vertex of each simplex, in the second case the final vertex.
    For an expository account of the matter, skipping most technicalities yet
    providing all necessary intermediate steps, see Groth's
    notes\icite[Sec.~2]{pp:groth15}. All details can be found in the
    literature%
    \icite[Sec.~6.2]{b:cisinski19}%
    \mcite[Sec.~1.2.13]{b:HTT}.%
    \envEnds
\end{thOutlook}

\begin{thExample}[pullbacks and pushouts]
    As an informal example of the previous outlook, we say a bit more about
    pullback and pushout squares. For this, we consider diagrams indexed by
    the horns $\horn 2 2$ and~$\horn 2 0$. The cone on $\horn 2 2$ and
    the cocone on $\horn 2 0$ are both isomorphic to $\nerve([1]\cross[1])$;
    a fact that is intuitively clear by the following pictures:
    \[
        \tikzcdset{diagrams={cramped}}
        \begin{tikzcd}
            {\color{gray}\ast} \ar[rr, gray] \ar[dr, gray]
                \ar[ drrr, gray
                   , start anchor=base east
                   , shorten >=5pt, end anchor={[yshift=1pt]} ]
            & & 1 \ar[dr] &
            \\
            & 0 \ar[rr] & & 2
        \end{tikzcd}
        \quad
        \parbox{1cm}{{\color{gray}cone\\ on} $\horn 2 2$}
        \hspace{1.5cm}
        \begin{tikzcd}
            & 1 & &
            {\color{gray}\ast} \ar[ll, gray, <-] \ar[dl, gray, <-]
                \ar[ dlll, gray, <-
                   , end anchor={[yshift=1pt]}
                   , shorten <=5pt, start anchor={[yshift=-1pt]} ]
            \\
            0 \ar[rr] \ar[ur] & & 2
        \end{tikzcd}
        \quad
        \parbox{1.3cm}{{\color{gray}cocone on} $\horn 2 0$\rlap{\kern1pt,}}
    \]
    i.e. both of them may index commutative squares
    \pcref{ex:commutativesquares} in an \oo\-category.
    Given a commutative square $p\colon\nerve([1]\cross[1])\to C$, we say that
    $p$ is
    \begin{itemize}
        \item a \emph{pullback square} if the cone on~$\horn 2 2$ that it
            determines is universal and it is
        \item a \emph{pushout square} if the cocone on~$\horn 2 0$ that it
            determines is universal,
    \end{itemize}
    both in the sense of \cref{out:limits}.
\end{thExample}

\begin{thOutlook}[stable \pdfInfty-category] \label{out:stable}%
    The property of being a stable \oo\-category\icite[Def.~1.1.1.9]{b:HA},
    which is especially important in the context of functor
    calculus\icite[Ch.~6]{b:HA}, can be defined in terms of pullbacks and
    pushouts\icite[Prop.~1.1.3.4]{b:HA}:
    An \oo\-category is \emph{stable} if
    \begin{itemize}
        \item
            it has a zero object, i.e. an object that is both initial and
            terminal,
        \item
            it has all finite limits and colimits, and
        \item
            a square in it is a pushout if and only if it is a pullback.
    \end{itemize}
    This concept originates from stable homotopy theory, which Lurie
    recalls and discusses in depth in the \oo\-categorical
    setting\icite[Sec.~1.4]{b:HA}.
    
    It should also be noted, that a precursor is the notion of a
    \emph{stable model category}\icite[Ch.~7]{b:hovey07}, which can
    similiarly be defined as a model category that has a zero object
    and in which homotopy pushouts and homotopy pullbacks
    coincide\icite[Rem.~7.1.12]{b:hovey07}.
    (For a primer on homotopy pushouts in~$\Top$, see
    \cref{sec:modelcatbits}.)%
    \envEnds
\end{thOutlook}

\subsection{How to produce \pdfInfty-categories} \label{sec:importing}%
The following method is, historically and because of its role in Lurie's
book\icite{b:HTT}, one of the most important constructions that convert
1-categorical input into a corresponding \oo\-category, taking the
homotopical structure into account:
\begin{thRemark}[homotopy coherent nerve] \label{rem:hcnerve}%
    Let $C$ be a \emph{simplicially enriched category}: instead of just morphism
    sets, each $\Mor_C(X,Y)$ is a simplicial set and the usual requirements
    regarding composition are applied mutatis mutandis\icitekerodon{00JQ}.
    For such a category~$C$, there is a construction analogous to the nerve of a
    1-category \pcref{def:1nerve}, called the \emph{homotopy coherent nerve} or
    \emph{simplicial nerve}, that builds a simplicial set $\nervehc(C)$ out of~$C$%
    \icite[Def.~1.1.5.5]{b:HTT}%
    \mcite[Sec.~2.4]{p:duggerspivak11a}.
    Under suitable conditions, $\nervehc(C)$ is an \oo\-category%
    \icite[Prop.~1.1.5.10]{b:HTT},
    which is in some sense a \enquote{homotopical thickening} of the
    nerve~$\nerve(C)$ of~$C$.
    
    To illustrate the last comment a bit more, we consider the example~$C=\Top$:
    For spaces $X$ and~$Y$, the set of continuous maps~$X\to Y$ can be extended
    to a simplicial set that satisfies
    \[ \Mor_{\Top}(X,Y)_n
        = \{ \text{continuous maps } \topstdsimplex{n}\cross X \to Y \}
    \]
    for all $n\in\N$\icitekerodon{00JV}; here, $\topstdsimplex{n}$ denotes the
    standard topological $n$-simplex.
    By easy inspection, one sees that the 2-simplices~$\nerve(\Top)_2$ of the usual
    nerve correspond to commuting triangles in~$\Top$, i.e. to triples $(f,g,h)$ of
    morphisms in~$\Top$ such that $g\after f = h$.
    By contrast, $\nervehc(\Top)_2$~will correspond to triangles commuting up to
    homotopy together with such a homotopy\icitekerodon{00KX}, i.e. to
    quadruples~$(f,g,h,H)$ where $f,g,h$ are morphisms in~$\Top$ and $H$~is a
    homotopy from $g\after f$ to~$h$ in the sense of simplicially enriched
    categories\icitekerodon{00JW}. In our example, the latter notion happens
    to coincide with the usual notion of homotopies
    in~$\Top$\icitekerodon{00JX}.
    The upshot is, that we still have the strictly commuting triangles, but also
    a lot of other ones, which allows for more flexibility.
\end{thRemark}
In certain situations, the homotopy coherent nerve can provide
\oo\-categorical versions of homotopy categories:
\begin{thRemark}[homotopy categories] \label{rem:homotopycategories}%
    Let $C$ be a \emph{simplicial model category}, i.e. a simplicially enriched
    category that is also a \emph{model category} in a compatible way. The
    structure of a model category\icite[Def.~2.2.1]{b:cisinski19} that is
    subject to some conditions, of course, is the following:
    There is a distinguished class of morphisms~$W$ of~$C$, called
    \emph{weak equivalences}, that one is interested in from a
    homotopical point of view. Furthermore there are two distinguished
    classes of auxiliary morphisms, called \emph{fibrations} and
    \emph{cofibrations}.
    
    There is a well\-known construction that assigns to a model category~$C$ with
    weak equivalences~$W$ its \emph{homotopy category}, which in turn presents
    itself as a model for the localization $C[W\inv]$,
    i.e.\icite[Def.~2.2.8]{b:cisinski19} the universal category obtained
    from~$C$ by making all morphisms in~$W$ isomorphisms. For example, looking
    at the category $\Top$ of topological spaces with $W$ the class of homotopy
    equivalences, the corresponding homotopy category can be obtained from
    $\Top$ by modding out the \enquote{is homotopic to} equivalence relation on
    all morphism sets.
    
    While 1-categorical homotopy categories are certainly useful, working with
    them tends to lose information in the sense that the homotopies that make
    diagrams commutative are not part of the data. They are merely required to
    exist for any choice of two parallel morphisms, but not necessarily
    in a coherent way, see also \cref{rem:htpycoherence}.
    
    In the case of the simplicial model category~$C$, this is remedied by
    an \oo\-categorical enhancement of~$C[W\inv]$: One takes a
    certain full subcategory $C^\circ$ of~$C$ (which inherits a simplicial
    enrichment from~$C$) and forms $\nervehc(C^\circ)$. This gives an
    \oo\-category that is tightly linked to the homotopy category of~$C$. (More
    specifically: one can also define what the homotopy category of an \oo\-category
    is and if one applies this notion to $\nervehc(C^\circ)$ the result will be a
    1-category that is equivalent to $C[W\inv]$.
    This can be seen by combining several facts about homotopy categories%
    \icitekerodon{00M4}%
    \mcite[Prop.~9.5.24\,(2)]{b:hirschhorn09}%
    \mcite[Sec.~7.5.6]{b:hirschhorn09}%
    .)
\end{thRemark}

However, while these constructions are certainly useful and applicable in some
important cases, they require a lot of structure -- which, in applications, is
often not naturally available. For example, one might want to consider (model)
categories where no extension to a \emph{simplicial} model category exists or is
known. And even if there is, the functors one would like to consider can most
likely not easily be lifted to respect the simplicial enrichment. Thus, it is
useful to have alternative approaches, specifically ones that allow to import a
given 1-categorical situation, including functors, more directly.
The following notion captures the minimal setting that is necessary to talk
about homotopical situations:
\begin{thDef}[relative category] \label{def:relcat}%
    A \emph{relative category $(C,W)$} consists of
    \begin{itemize}
        \item
            a category~$C$, together with
        \item
            a class of morphisms~$W$ of~$C$, called \emph{weak equivalences}.
    \end{itemize}
    A \emph{relative functor}, also called a \emph{homotopical functor},
    from a relative category~$(C_1,W_1)$ to a relative category~$(C_2,W_2)$
    is a functor $F\colon C_1\to C_2$ that maps weak equivalences to weak
    equivalences, i.e. with $F(W_1) \subset W_2$.%
    \envEnds
\end{thDef}
Barwick and Kan have shown\icite{p:barwickkan12} that relative categories
can be used to model the homotopy theory of \oo\-categories (see also
\cref{rem:alternativeInftyCat}). Also note, that every model category
yields an example of a relative category by simply forgetting the
fibrations and cofibrations.

Similarly to \cref{rem:homotopycategories}, one is usually not only
interested in a 1-categorical localization~$C[W\inv]$ of a relative
category~$(C,W)$, but rather in a better behaved \oo\-categorical version
thereof. Classically, this was done by a process called \emph{simplicial
localization}, which was introduced and studied in a series of papers by
Dwyer and Kan\icite{p:dwyerkan80a,p:dwyerkan80b,p:dwyerkan80c}.
From this, one gets a simplicially enriched category -- but not of the form
to which \cref{rem:hcnerve} applies directly.
Instead of trying to rectify this, one can go straight to
localizations of \oo\-categories\icite[Sec.~7.1]{b:cisinski19}:
\begin{thRemark}[localization] \label{rem:localization}%
    Let $(C,W')$ be a relative category \pcref{def:relcat}.
    Using the terminology of \cref{def:nomenclature}, we can view
    $W'$~as a subset of the morphisms of either $C$~itself or of its
    nerve~$\nerve(C)$. Let $W$ denote the smallest simplicial subset
    of~$\nerve(C)$ that contains~$W'$.
    One can then form the localization~$W\inv\!\nerve(C)$, which
    is an \oo\-category right away\icite[Prop.~7.1.3]{b:cisinski19}.
    (It will have\icite[Rem.~7.1.6]{b:cisinski19} the same link to
    $C[W\inv]$ as described at the end of \cref{rem:homotopycategories}.)
\end{thRemark}

\subsection{Other names and other models} \label{sec:othernames}%
As the final act of this section, we should emphasize, that we
merely presented one facet of the theory of \oo\-categories. So
as not to confuse the novice reader, we purposely refrained from
mentioning this before; but since its complete concealment might
lead to frustration when consulting the literature, we deem it
necessary to inform the reader of the seemingly bizarre terminology
landscape:

\begin{thRemark}[other words for \pdfInfty-category]
    In the literature, there are three terms for the \emph{same} object,
    listed in reverse chronological order of appearance:
    \begin{itemize}
    \item
        \oo\-category \pcref{def:infcat}, as used by Lurie\icite{b:HTT},
    \item
        \emph{quasi-category}, Joyal's terminology%
        \icite{p:Joyal02}\mcite{pp:Joyal08}, and
    \item
        \emph{weak Kan complex}, introduced by Boardman and Vogt%
        \icite{b:BoardmanVogt73}\mcite{p:Vogt73}.
    \end{itemize}
    All of them are still commonly used, although it is customary to
    annotate the choice of \enquote{\oo\-category} with the phrase \enquote{in
    terms of Lurie} or similar, in order to avoid misunderstandings (see
    \cref{rem:alternativeInftyCat}).%
    \envEnds
\end{thRemark}

\begin{thRemark}[other objects that are called \pdfInfty-category]
    \label{rem:alternativeInftyCat}%
    On the other hand, there are \emph{different} objects that are also termed
    \enquote{\oo\-categories} in the literature:
    \begin{itemize}
    \item
        topologically enriched categories,
    \item
        simplicially enriched categories,
    \item
        Segal categories,
    \item
        complete Segal spaces,
    \item
        \ldots
    \end{itemize}
    This stems mainly from the fact, that during the evolution of higher
    category theory, the term \enquote{\oo\-category} was used to denote
    the somewhat vague concept of \enquote{category with infinitely many
    levels of morphisms}. To make this notion precise, several people developed
    different approaches, which resulted (among others) in the objects listed
    above. Nowadays, it is common to denote the conceptual idea, which they are
    all a model of, by the term \enquote{$(\infty,1)$\-category}. While a little
    longer than \enquote{\oo\-category}, this prevents ambiguities.%
    \envEnds
\end{thRemark}

For a more comprehensive account of the history and comparison between
the different conceptual aspects and models, we refer to the literature%
\icite{b:Bergner18}%
\mcite{p:Bergner10}%
\mcite[Introduction and Sec.~2--5]{pp:Joyal08}%
\mcite[Ch.~1]{b:HTT} %
and the \mbox{\emph{n}Lab}\icite{web:nlab}, starting at the articles
\enquote{higher category theory}, \enquote{(n,r)-category}, and
\enquote{quasi-category}.

\section{Excision for singular homology} \label{sec:singular}%
Let us consider singular homology (with arbitrary constant coefficients) on~$\Top$.
It satisfies classical excision, e.g. in the sense of having Mayer-Vietoris
sequences. As discussed before~\pcref{sec:approxchaincomplexfunctor},
this should rather be seen as a property of the underlying chain complex functor~$C$,
and indeed formal arguments then can be used to derive
Mayer-Vietoris sequences from this fact, see \cref{sec:classical}. In this
section we will show that $C$~really is excisive in the sense
of~\cref{def:excisive}, i.e. that the latter notion is a generalization of the
classical property.

For the entire section, we fix a ring~$k$ and a left $k$-module~$M$. Since
neither $k$ nor~$M$ play any role in the following, we simply denote by
$C \defeq C(\blank;M)$ the singular chain complex functor with
$M$-coefficients, and by $\Ch \defeq \Chk$ the category of chain complexes
over~$k$.

\begin{thTheorem}[\texorpdfstring{$C$}{C} is excisive] \label{thm:Cexcisive}%
    The \oo-categorical singular chain complex functor $C\colon\Top\to\Ch$ is excisive.
\end{thTheorem}
As this is often used as an example or motivation for the definition of an
excisive functor, it seems to be an easy fact for homotopy theorists, but an
explicit proof seems to be hard to find in the literature, so we will include
one here.

\subsection{Model category theoretical bits} \label{sec:modelcatbits}%
To prove the main theorem of this section, we will use the formalism of model
categories and we will show the following more explicit (1-categorical) version:
\begin{thProposition} \label{prop:Cexcisive1cat}%
    Homotopy pushout squares in~$\Top$ are sent to homotopy pullback squares
    in~$\Ch$ by the singular chain complex functor.
\end{thProposition}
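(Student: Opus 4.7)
The plan is to reduce everything to classical excision via the natural model structures on both sides. Because $\Ch$ with its projective model structure is stable, homotopy pushouts and homotopy pullbacks coincide in $\Ch$, so it suffices to show that $C$ sends homotopy pushouts in $\Top$ to homotopy pushouts in $\Ch$. I would model a given homotopy pushout in $\Top$ (Quillen model structure) by a strict pushout along a cofibration $W \incl V$; this is legitimate because $C$ sends weak equivalences of spaces to quasi-isomorphisms. Since $W \incl V$ is injective, $C(W) \to C(V)$ is a degreewise injection whose cokernel $C(V)/C(W)$ is degreewise free (a basis in degree $n$ consists of the singular $n$-simplices of $V$ whose image is not contained in $W$). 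Hence $C(W) \to C(V)$ is a cofibration in the projective model structure on $\Ch$, and the strict pushout $C(V) \sqcup_{C(W)} C(U)$ is automatically a homotopy pushout there.

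It would then remain to show that the canonical comparison map
\[ C(V) \sqcup_{C(W)} C(U) \longrightarrow C(V \cup_W U) \]
is a quasi-isomorphism. To that end, I would place both sides into short exact sequences of chain complexes with $C(U)$ as kernel; the comparison map is the identity on $C(U)$ and induces, on the quotients, the natural map $C(V, W) \to C(V \cup_W U, U)$ of relative singular chain complexes. Because $W \incl V$ is a cofibration (and cofibrations are stable under pushout), both $(V, W)$ and $(V \cup_W U, U)$ are good pairs, so combining the standard isomorphism $H_*(Z, A) \isom \tilde H_*(Z/A)$ for good pairs with the canonical homeomorphism $V/W \isom (V \cup_W U)/U$ yields an isomorphism on the homology of pairs. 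A five-lemma argument on the associated long exact sequences then upgrades this to a quasi-isomorphism of chain complexes, completing the argument.

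The main obstacle is this last excision step: it is classical but requires care to apply to general Quillen cofibrations, whose subdivision behavior is not as immediate as for CW inclusions. In practice I would first handle CW inclusions by transfinite induction (with base case the pushout of $S^{n-1} \incl D^n$, where the comparison can be verified directly via the cellular structure), and then extend to arbitrary Quillen cofibrations using a retract argument; alternatively, one could appeal directly to an NDR-pair version of excision or to the barycentric subdivision argument underlying Mayer--Vietoris, both of which apply verbatim to cofibration pairs.
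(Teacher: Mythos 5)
Your argument is correct in outline, but it takes a genuinely different route from the paper's. The paper first replaces an arbitrary homotopy pushout by an open triad ($U,V$ open in $X$, $W=U\intersect V$, all maps inclusions), and then substitutes for $C(X)$ the subcomplex $C^{U,V}(X)$ of small chains; the small-chains theorem makes this substitution a componentwise weak equivalence of squares, and the resulting square is \emph{literally} a strict pushout of chain complexes along a cofibration (the pushout condition becomes the tautological exact sequence $C(W)\to C(U)\oplus C(V)\to C^{U,V}(X)\to 0$), so stability of $\Ch$ finishes the proof with no comparison map left to analyze. You instead replace the homotopy pushout by a strict pushout along a Quillen cofibration $W\incl V$ and must then show that $C(V)\sqcup_{C(W)}C(U)\to C(V\union_W U)$ is a quasi-isomorphism, which you do via good pairs and $H_*(Z,A)\isom\widetilde{H}_*(Z/A)$. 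This works: every Quillen cofibration is a closed Hurewicz cofibration (retracts of relative cell complexes are NDR pairs), as is its pushout $U\incl V\union_W U$, so both pairs are good, the quotients $V/W$ and $(V\union_W U)/U$ are canonically homeomorphic, and the five-lemma step goes through. In particular the worries in your final paragraph are unnecessary --- no transfinite induction or separate treatment of cell attachments is needed --- though you should note that you have not avoided the classical subdivision input, only relocated it: the good-pair isomorphism is itself a corollary of the excision theorem, which rests on the same barycentric-subdivision result (Hatcher, Prop.~2.21) that the paper cites for the small-chains inclusion. The paper's route buys a shorter endgame (no quotient spaces, no five lemma); yours stays entirely within familiar cofibration technology and the textbook form of excision. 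Both arguments share the reduction via stability of $\Ch$, the observation that $C$ of an injection is a projective cofibration, and the invariance of homotopy pullbacks in $\Ch$ under componentwise weak equivalences of squares (right properness) --- the last of which you use implicitly when transporting the problem to your cofibrant model and should state explicitly.
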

Here, we consider the \emph{classical} or \emph{Quillen model
structure} on~$\Top$\icite[Sec.~2.4]{b:hovey07} and the
\emph{projective model structure} on~$\Ch$\icite[Sec.~2.3]{b:hovey07}.
Weak equivalences in the former are the weak homotopy equivalences, i.e.
continuous maps that induce isomorphism on all homotopy
groups\icite[Sec.~6.7]{b:tomdieck08}, and weak equivalence in the latter
are the quasi-isomorphisms, i.e. chain maps that induce an isomorphism on
homology.
No further familiarity with the model structures is required to
follow the rest of this and the overall logic of the subsequent sections.
However, in the proofs of the latter, we cannot fully avoid to mention
(co)fibrations, for example when we need to splice together
results from different references. The desperate reader may find an
introduction to (the nomenclature of) model categories in Cisinski's
book\icite[Sec.~2.2]{b:catLR}.

We will now give an ad~hoc definition of homotopy pushouts in~$\Top$, which can
be thought of as \enquote{thick pushouts}, as witnessed by the \emph{double
mapping cylinder} construction in the below definition. As we shall see
shortly, we will actually concern ourselves only with homotopy pushouts of a
certain form, but using that as our definition would hide the essence of this
homotopy theoretic notion entirely.
Thus we have\icite[Def.~15.5.8]{b:hirschhorn09}\mcite[Def.~3.7.1]{b:munsonvolic15}:
\begin{thDef}[homotopy pushout (square) in~\texorpdfstring{$\Top$}{Top}]
    \label{def:htpypushout}%
    A commutative square
    \[ \begin{tikzcd}
        W \ar[r, "f"'] \ar[d, "g"'] & U \ar[d, "p"] \\ V \ar[r, "q"] & X
        \end{tikzcd}
    \]
    in~$\Top$ is a \emph{homotopy pushout (square)} if the composition
    \[ \bigl( U \djunion (W\cross [0,1]) \djunion V \bigr) / {\sim}
            \surround\quad{
            \xrightarrow[(w,t) \mapsto f(w) \mkern1mu\text{''}]%
                        {\text{\llap{``}} u\mkern0.5mu\mapsto u,\; v\mapsto v}
            }
        \pushoutobj{U f W g V}
            \surround\quad{
            \xunderrightarrow{\pushoutmor{p f g q}}
            }
        X
    \]
    is a weak equivalence; here
    \begin{itemize}
        \item
            the first space is the \emph{double mapping cylinder} of $f$ and~$g$,
            where ${\sim}$ is the equivalence relation generated by $f(w) \sim
            (w,0)$ and $(w,1) \sim g(w)$ for $w\in W$.
            
        \item
            $\pushoutobj{U f W g V}$ is the pushout $(U\djunion V)/{\approx}$ where
            ${\approx}$ is generated by $f(w)\approx g(w)$ for $w\in W$.
            
        \item
            $\pushoutmor{p f g q}$ is the map induced by $p$ and~$q$ by the
            universal property of the pushout.
    \envEndsHere
    \end{itemize}
\end{thDef}
Similarly, homotopy pullback squares can be defined in~$\Ch$. For the
following, however, we will not need this explicitly.
\begin{thExample}
    There are homotopy pushouts
    \[
        \tikzcdset{diagrams={column sep=small, baseline=-4pt}}
        \begin{tikzcd}
                \emptyset \ar[r] \ar[d] & U \ar[d] \\ V \ar[r] & U\djunion V
        \end{tikzcd}
        \qqtextqq{and}
        \begin{tikzcd}
            X \ar[r] \ar[d] & X\cross[0,1] \ar[d] \\ X \ar[r] & X
        \end{tikzcd}
        \qqtextqq{and}
        \begin{tikzcd}
            X \ar[r] \ar[d] & \Cone(X) \ar[d] \\ \Cone(X) \ar[r] & \Sigma X
        \end{tikzcd}
    \]
    for all topological spaces $U,V,X$ (where $\Cone(X)$ is the \emph{cone
    on~$X$} and $\Sigma X$ is the \emph{suspension of~$X$}).
\end{thExample}

\begin{thRemark}[philosophy behind homotopy (co)limits] \label{rem:philosophyhtpylim}%
    Generally speaking, homotopy (co)limits make up for the fact that ordinary
    (co)limits do \emph{not} preserve the notion of weak equivalence.
    As a concrete example in~$\Top$, we consider the commutative diagram
    \[ \begin{tikzcd}[column sep=1.5cm, row sep=0.6cm]
        \point \ar[d] &  \ar[l] S^0 \ar[r, "\text{inclusion}"'] \ar[d, "\id"'] & D^1 \ar[d] \\
        \point        &  \ar[l] S^0 \ar[r]                                  & \point
        \end{tikzcd}
    \]
    and note, that even though all vertical maps are (weak) homotopy
    equivalences, the map $D^1/S^0\to\point$ that they induce
    on the ordinary pushouts of the rows is \emph{not} a (weak) homotopy
    equivalence.

    Moreover, homotopy (co)limits in a model category are strongly connected
    to \oo\-categorical (co)limits in the associated \oo\-category, i.e. the
    localization \pcref{rem:localization} at weak equivalences. For example:
    the latter compute the former\icite[Rem.~7.9.10]{b:catLR}, (certain)
    homotopy pushout squares are sent to pushout squares in the \oo\-category by
    the localization functor\icite[dual of Cor.~7.5.20]{b:catLR}, and in the
    setting of simplicially enriched categories (see \cref{rem:hcnerve}) a
    theorem by Lurie compares both notions\icite[Thm.~4.2.4.1]{b:HTT}.
\end{thRemark}

\begin{thRemark}[existence and uniqueness of homotopy pushouts]
    \label{rem:thehtpypushout}%
    Given the solid part of the diagram
    \[ \begin{tikzcd}
        W \ar[r, "f"'] \ar[d, "g"'] & U \ar[d, dashed] \\ V \ar[r, dashed] & ?
        \rlap{,}
        \end{tikzcd}
    \]
    one can always extend it to an ordinary pushout square, e.g. with
    $\mathord{?} = \pushoutobj{U f W g V}$ (in the notation of
    \cref{def:htpypushout}). However, it may happen, that there is \emph{no}
    extension to a commutative homotopy pushout square! For example, assume
    that there is a homotopy pushout
    \[ \begin{tikzcd}
        S^0 \ar[r] \ar[d] & \point \ar[d] \\ \point \ar[r] & X
        \end{tikzcd}
    \]
    and denote the corresponding double mapping cylinder by~$Z$.
    Then the weak equivalence $Z\to X$ factors through~$\point$ by definition,
    but this is impossible as $Z$~is homeomorphic to~$S^1$.
    
    On the other hand, if there are two homotopy pushouts
    \[
        \tikzcdset{diagrams={baseline=-2.5pt}}
        \begin{tikzcd}
            W \ar[r, "f"'] \ar[d, "g"'] & U \ar[d, "p"] \\ V \ar[r, "q"] & X
        \end{tikzcd}
        \qqtextqq{and}
        \begin{tikzcd}
            W \ar[r, "f"'] \ar[d, "g"'] & U \ar[d, "p'"] \\ V \ar[r, "q'"] & X'
        \end{tikzcd}
    \]
    with the same $W,U,V,f,g$, the spaces $X$ and~$X'$ must be in the same
    equivalence class that is generated by weak equivalences, because the double
    mapping cylinder of $f$ and~$g$ is weakly equivalent to both of them.
    One may thus call (the weak equivalence class of) the double mapping cylinder
    the \emph{homotopy pushout of the diagram}
    \[ \begin{tikzcd}
        V & \ar[l, "g"'] W \ar[r, "f"] & U
        \rlap{\kern1pt,}
        \end{tikzcd}
    \]
    and this is well\-defined, even if there is no extension to a homotopy
    pushout square.%
    \envEnds
\end{thRemark}

\begin{thExample} \label{ex:nonexcisiveproof}%
    In \cref{ex:nonexcisivefunctor} we claimed that there is an \oo\-categorical pushout
    \[ \tag{$\square$}\label{ex:nonexcisiveproof:square}
        \begin{tikzcd}[baseline=-2pt]
                S^0 \ar[r, "i"]\ar[d] & D^1 \ar[d, "q"'] \\
                \point \ar[r]       & D^1/S^0
        \end{tikzcd}
    \]
    in~$\Top$, where $i$~is the inclusion and $q$~the quotient map, that is
    \emph{not} a pullback. Using the language of homotopy pushouts/pullbacks
    and the references given in \cref{rem:philosophyhtpylim}, we can now give
    a proof of this:
    
    The given square~\eqref{ex:nonexcisiveproof:square} is a homotopy pushout
    and thus sent to an \oo\-pushout by the localization functor.
    However, it cannot be an \oo\-pullback: Dually to \cref{rem:thehtpypushout},
    there is a well\-defined weak equivalence class of spaces that one can call
    \emph{homotopy pullback of the diagram}
    \[ \begin{tikzcd}
        \point \ar[r] &  D^1/S^0 & \ar[l, "q"'] D^1
        \rlap{\kern1pt.}
        \end{tikzcd}
    \]
    Now if we assume, for a contradiction, that
    \eqref{ex:nonexcisiveproof:square}~is an \oo\-pullback,
    the assertion that \oo\-pullbacks compute homotopy pullbacks just means,
    that $S^0$~is in this weak equivalence class. We will now show that also
    the loop space~$\Omega S^1$, at an arbitrary base point $e\in S^1$, is
    in this equivalence class, which cannot both be true, as we would get
    \[ 2 = \card{\pi_0(S^0)} = \card{\pi_0(\Omega S^1)} = \card{\pi_1(S^1)}
         = \card{\Z}
    \]
    as cardinal numbers. To that end, consider the following diagram:
    \[ \begin{tikzcd}[column sep=1.5cm, row sep=0.6cm]
            \point \ar[r]          &  D^1/S^0        & \ar[l, "q"] D^1           \\
            \point \ar[r, "e"] \ar[u] &  S^1     \ar[u] & \ar[l, "e"'] \point \ar[u] %
        \rlap{\kern2pt.}
        \end{tikzcd}
    \]
    We can certainly find vertical maps such that they are all weak
    equivalences and such that both squares commute. Now because the homotopy
    pullback of diagrams \emph{does} respect such a \enquote{weak equivalence
    of diagrams}\icite[Prop.~13.3.4]{b:hirschhorn09} (compare
    \cref{rem:philosophyhtpylim} for ordinary limits), the homotopy pullbacks of
    the rows yield the same weak equivalence class. Using an explicit definition
    of homotopy pullback\icite[Def.~3.2.4]{b:munsonvolic15} it is then easy to
    see, that the homotopy pullback of the diagram in the bottom row is
    given by~$\Omega S^1$\icite[Ex.~3.2.10]{b:munsonvolic15}.%
    \envEnds
\end{thExample}

In the rest of this section, we will give a proof of~\cref{prop:Cexcisive1cat}
and then show how this implies \cref{thm:Cexcisive}.

\subsection{Proof of the model categorical statement}
First of all, we may reduce our attention to homotopy pushouts of a specific
type by virtue of the following three facts:
\begin{thProposition}[\titlecite{b:munsonvolic15}{Prop.~3.7.4 and Ex.~3.7.5}] \label{prop:triad}%
    Let $S'$ be a homotopy pushout square in~$\Top$. Then there exists a homotopy
    pushout square
    \[ S =
        \begin{tikzcd}[pmatrix, cramped, sep=1.65em]
            W \ar[r] \ar[d] & U \ar[d] \\ V \ar[r] & X
        \end{tikzcd}
    \]
    and a map of squares $\eta\colon S\to S'$, such that
    \begin{itemize}
        \item $U$ and~$V$ are open subsets of~$X$ and $W = U\intersect V$,
        \item all maps in~$S$ are inclusions, and
        \item $\eta$ is component-wise a weak equivalence.
    \end{itemize}
    Furthermore, any square~$S$ as above is a homotopy pushout square.%
    \envEnds
\end{thProposition}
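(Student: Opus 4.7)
The plan is to split the claim into its existence part and its ``furthermore'' part and address them independently.

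For the existence part, I would replace $S'$ by a thickening. Write the vertices of $S'$ as $W', U', V', X'$ with maps $f'\colon W' \to U'$ and $g'\colon W' \to V'$, and let
\[ X \defeq \bigl(U' \djunion (W' \times [0,1]) \djunion V'\bigr)/{\sim} \]
be the double mapping cylinder of \cref{def:htpypushout}. By the assumption that $S'$ is a homotopy pushout, the canonical map $X \to X'$ is a weak equivalence. Inside $X$, let $U \subset X$ be the image of $U' \djunion \bigl(W' \times [0,\tfrac{2}{3})\bigr)$ and $V \subset X$ the image of $\bigl(W' \times (\tfrac{1}{3},1]\bigr) \djunion V'$; both are open in $X$. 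Set $W \defeq U \intersect V$, which is the image of $W' \times (\tfrac{1}{3},\tfrac{2}{3})$. Then $U \union V = X$, the four structural maps in the resulting square $S$ are inclusions, and each of $U \to U'$, $V \to V'$, $W \to W'$ is a homotopy equivalence via the obvious deformation retraction of the cylinder piece onto its ``end''. These assemble to a componentwise weak equivalence $\eta\colon S \to S'$.

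For the ``furthermore'' part, I would show that any square $S$ satisfying the three listed properties (and, as implicitly required for surjectivity on $\pi_0$, $U \union V = X$) is a homotopy pushout. Equivalently, the natural map from the double mapping cylinder of $U \hookleftarrow W \hookrightarrow V$ to $X$ must be a weak equivalence. This is the classical excision statement for open covers: one can either use a partition of unity subordinate to $\{U,V\}$ to produce an explicit homotopy inverse, or, on the level of singular chains, invoke the barycentric ``small simplex'' subdivision argument to show that the inclusion of small-simplex chains induces a quasi-isomorphism and then lift to homotopy groups via a Whitehead-type argument.

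The main obstacle lies precisely in the ``furthermore'' part: the existence construction is essentially bookkeeping around the double mapping cylinder, whereas the furthermore claim is where the genuine homotopical content enters, ultimately relying on the partition-of-unity or barycentric-subdivision techniques underlying classical excision.
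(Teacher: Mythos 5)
First, a point of comparison: the paper does not actually prove this proposition --- it is imported verbatim from Munson and Voli\'c --- so there is no internal proof to measure your argument against; I will judge it on its own terms. Your existence part is correct and is the standard construction (essentially Munson--Voli\'c's Example~3.7.5): replace $X'$ by the double mapping cylinder $Z$ of \cref{def:htpypushout}, cover it by the two open ``two-thirds'' pieces, and use the evident deformation retractions of the cylinder ends to assemble the componentwise weak equivalence $\eta$. You are also right to observe that $U \union V = X$ has to be added to the list of hypotheses for the ``furthermore'' statement to be true as written; this is implicit in the source.

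The genuine gap is in the ``furthermore'' part, where both of your proposed routes break down for general topological spaces. (1)~A partition of unity subordinate to a given open cover need not exist unless the cover is numerable (e.g.\ if $X$ is normal one can apply Urysohn to the disjoint closed sets $X\setminus U$ and $X\setminus V$, but in general one cannot); since the proposition is asserted for arbitrary spaces and arbitrary open $U,V$, you may not invoke one. (That argument is tom Dieck's theorem and proves the stronger conclusion that $Z \to X$ is a genuine homotopy equivalence --- but only for numerable covers.) (2)~The singular-chains route is worse: barycentric subdivision does show that $C(Z) \to C(X)$ is a quasi-isomorphism, but a map inducing isomorphisms on integral singular homology need not be a weak homotopy equivalence --- there are acyclic spaces with nontrivial fundamental group --- and the ``Whitehead-type argument'' you appeal to requires both spaces to be simply connected (or at least nilpotent), which is not available here. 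To make a subdivision argument work one has to run it directly on maps of disks and spheres (showing that a map of a disk into $X$, together with a homotopy, can be deformed and subdivided so that each piece lands in $U$ or in $V$), or else first establish the isomorphism on $\pi_0$ and $\pi_1$ via van Kampen and then argue with homology in all local coefficient systems. That is exactly the hard step carried by the cited result, and it is the step your sketch leaves unproved.
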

\begin{thProposition}[\titlecite{b:hatcher02}{Prop.~4.21}] \label{prop:Chomotopical}%
    The singular chain complex functor is \emph{homotopical}, i.e. it sends weak
    equivalences in~$\Top$ to weak equivalences in~$\Ch$.
\end{thProposition}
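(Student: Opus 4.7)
The plan is to reduce to the cited Hatcher result via the standard dictionary: a map in $\Ch$ is a weak equivalence (in the projective model structure) if and only if it is a quasi-isomorphism, i.e.\ induces isomorphisms on all homology groups. Since Hatcher's Proposition~4.21 states precisely that a weak homotopy equivalence $f\colon X\to Y$ induces isomorphisms $H_n(X)\to H_n(Y)$ for every $n\in\N$, citing this result closes the proof. For the convenience of the reader, I would also sketch the classical argument below.

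First I would factor $f$ as $X\incl M_f\xrightarrow{r} Y$, where $M_f$ is the mapping cylinder of~$f$ and $r$ is the canonical deformation retraction; in particular $r$ is an ordinary homotopy equivalence. The singular chain complex functor sends ordinary homotopy equivalences to chain homotopy equivalences, via the classical prism/subdivision construction producing, for any homotopy $H\colon f_0\simeq f_1$, a natural chain homotopy between $C(f_0)$ and $C(f_1)$. Hence $C(r)$ is a chain homotopy equivalence and in particular a quasi-isomorphism. Since $C(f)=C(r)\after C(\iota)$ with $\iota\colon X\incl M_f$, the two-out-of-three property reduces the claim to showing that $C(\iota)$ is a quasi-isomorphism; and via the long exact sequence of the pair $(M_f,X)$ this further reduces to the vanishing $H_n(M_f,X)=0$ for all $n\in\N$.

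The heart of the argument, and what I expect to be the main obstacle, is this last vanishing. Because $\iota$ is a weak equivalence (as $f$ is and $r$ is an ordinary homotopy equivalence), the relative homotopy groups $\pi_n(M_f,X,x_0)$ vanish for every $n\geq 1$ and every base point $x_0\in X$. Given a relative cycle $c\in C_n(M_f,X)$, the plan is to interpret this vanishing as a compression statement: each singular simplex appearing in~$c$ can be homotoped, rel those of its faces that already land in~$X$, to a map $\Delta^n\to X$, and the resulting homotopies can be assembled (via the prism operator) into a chain null-homotopy of~$c$ modulo $C(X)$. Making this assembly coherent and inductive in~$n$ — so that choices on lower-dimensional faces propagate consistently to higher simplices — is the delicate combinatorial step. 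Since it is carried out in detail in the cited source, we content ourselves with the reference.
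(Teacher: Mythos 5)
Your proposal is correct and coincides with the paper's treatment: the paper gives no proof of its own but simply cites Hatcher's Proposition~4.21, which (after unwinding that weak equivalences in $\Ch$ are quasi-isomorphisms) is exactly the statement needed. Your additional sketch of the mapping-cylinder factorization and the compression argument is a faithful outline of Hatcher's own proof, so nothing is missing.
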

\begin{thProposition} \label{prop:pullbackinvariant}%
    Being a homotopy pullback square in~$\Ch$ is invariant under weak
    equivalences of squares in the following sense:
    Given two squares $S$ and~$S'$
    in~$\Ch$ and a map of diagrams $S\to S'$ that is component-wise a
    weak equivalence, then $S$ is a homotopy pullback if and only if $S'$ is.
\end{thProposition}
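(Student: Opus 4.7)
The plan is to reduce the statement to two standard model-categorial facts: (i)~a commutative square is a homotopy pullback square iff the canonical comparison map from its apex to a (fixed) model of the homotopy pullback of its outer cospan is a weak equivalence; and (ii)~the homotopy pullback of a cospan depends, up to weak equivalence, only on the component-wise weak equivalence class of that cospan. Given these, the claim follows by a two-out-of-three argument in~$\Ch$.

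Concretely, I would first fix a functorial model $H(A\to C\leftarrow B)$ of the homotopy pullback of a cospan in~$\Ch$, e.g.\ by factoring $A\to C$ as a trivial cofibration followed by a fibration in the projective model structure on~$\Ch$ (using a functorial factorization, which exists because the structure is cofibrantly generated) and then forming the ordinary pullback along $B\to C$; note that every chain complex is fibrant in the projective structure, so this really does compute the homotopy pullback. Every commutative square~$S$ with outer cospan $A\to C\leftarrow B$ and apex~$P$ then comes with a canonical comparison map $P\to H(A\to C\leftarrow B)$, natural in~$S$, and $S$~is a homotopy pullback square iff this map is a weak equivalence. Given a component-wise weak equivalence $S\to S'$, naturality of~$H$ produces a commutative square
\[
    \xymatrix{
        P  \ar[r]\ar[d] & H(A\to C\leftarrow B) \ar[d] \\
        P' \ar[r]       & H(A'\to C'\leftarrow B')
    }
\]
whose left vertical is a weak equivalence by hypothesis and whose right vertical is one by~(ii). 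Two-out-of-three for weak equivalences in~$\Ch$ then shows that the top is a weak equivalence iff the bottom is, which is exactly the claim.

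The main obstacle is~(ii), the homotopy invariance of the homotopy pullback construction. This is the chain-complex analogue of~\cite[Prop.~13.3.4]{b:hirschhorn09}, which was already invoked in~\cref{ex:nonexcisiveproof} in its topological incarnation; the argument given there uses only the general model-categorial axioms together with the fact that all objects are fibrant, and carries over to~$\Ch$ with the projective model structure verbatim. Alternatively, (ii)~can be seen as an instance of Ken Brown's lemma applied to the category of cospan-shaped diagrams in~$\Ch$, equipped with either its Reedy or its injective model structure. Either way, once the functorial model~$H$ is in place the proposition reduces to bookkeeping, and no genuinely new input is needed.
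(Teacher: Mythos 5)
Your argument is correct, but it takes a more self-contained route than the paper, whose entire proof is a citation: the statement \emph{is} Proposition~13.3.13 of Hirschhorn's book\icite{b:hirschhorn09}, and the only thing the paper checks is that its hypothesis holds, namely that $\Ch$ is right proper because every object is fibrant in the projective model structure\icite[Thm.~2.3.11]{b:hovey07}. What you write is essentially an unpacking of the proof of that cited proposition: you build a functorial model~$H$ of the homotopy pullback of a cospan, characterize homotopy pullback squares by the comparison map into~$H$, invoke homotopy invariance of~$H$ under component-wise weak equivalences of cospans (Hirschhorn's Prop.~13.3.4, which the paper also uses in \cref{ex:nonexcisiveproof}), and finish by two-out-of-three. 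Two small remarks. First, Proposition~13.3.4 is already stated for arbitrary right proper model categories, so nothing needs to be \enquote{carried over verbatim} from the topological case; it applies to the projective model structure on~$\Ch$ directly once right properness is known --- which is exactly the same input the paper's one-line proof rests on. Second, your step~(i) silently identifies whatever model of the homotopy pullback underlies the (unstated, presumably dual to \cref{def:htpypushout}) definition of homotopy pullback square in~$\Ch$ with your one-sided factorization model; this identification is standard in a right proper model category, but it is the one place where right properness enters your argument implicitly rather than explicitly. The trade-off is clear: the paper's proof is a citation plus a hypothesis check, while yours makes the mechanism visible at the cost of re-deriving a result that could simply be quoted.
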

\begin{proof}
    This is Proposition~13.3.13 in Hirschhorn's book\icite{b:hirschhorn09}.
    Its prerequisites are fulfilled because every object in~$\Ch$ is
    fibrant\icite[Thm.~2.3.11 and subsequent paragraph]{b:hovey07} and thus
    $\Ch$ is right proper\icite[Cor.~13.1.3]{b:hirschhorn09}.%
    \qedplacement{\qedNOThere}{}
\end{proof}

With this, we are left to show:
\begin{thProposition}
    Let $S$ be as in \cref{prop:triad}. Then the induced square
    \[ \begin{tikzcd}
        C(W) \ar[r] \ar[d] & C(U) \ar[d] \\ C(V) \ar[r] & C(X)
        \end{tikzcd}
    \]
    is a homotopy pullback in~$\Ch$.
\end{thProposition}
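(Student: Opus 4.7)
The plan is to prove the \emph{dual} statement — that the induced square is a homotopy \emph{pushout} in~$\Ch$ — and then invoke stability of~$\Ch$ to conclude, since in a stable category homotopy pushout squares and homotopy pullback squares coincide.

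First, I would identify the strict pushout of the cospan $C(U)\leftarrow C(W)\to C(V)$ inside the abelian category~$\Ch$. Both structure maps are inclusions of subcomplexes induced by $W = U\intersect V\incl U$ and $W\incl V$, so this strict pushout is canonically the subcomplex $C(U)+C(V)\subseteq C(X)$ spanned in each degree by singular simplices whose image lies entirely in~$U$ or entirely in~$V$, and the universal map to~$C(X)$ is the inclusion. I would then argue that this strict pushout already computes the homotopy pushout, using that the inclusions $C(W)\incl C(U)$ and $C(W)\incl C(V)$ are degreewise split monomorphisms with degreewise free cokernel (the free abelian groups on singular simplices in $U$, resp.~$V$, not factoring through~$W$); concretely, the canonical map from the double mapping cylinder of $C(W)\to C(U)$ and $C(W)\to C(V)$ to the strict pushout is a quasi-isomorphism.

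The essential remaining ingredient is the classical small chains theorem\icite[Prop.~2.21]{b:hatcher02}, which I would cite to conclude that the inclusion $C(U)+C(V)\incl C(X)$ is a quasi-isomorphism via iterated barycentric subdivision. Combining these steps identifies the given square with a homotopy pushout in~$\Ch$, and stability of~$\Ch$ (already invoked in the proof of \cref{thm:main}) upgrades this to a homotopy pullback. The only substantive obstacle is the small chains theorem, which is the classical analytic core of excision for singular homology; everything else is formal bookkeeping inside~$\Ch$.
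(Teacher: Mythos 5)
Your proposal is correct and follows essentially the same route as the paper: both reduce via stability of~$\Ch$ to showing the square is a homotopy pushout, identify the strict pushout of the cospan with the subcomplex $C(U)+C(V)\subseteq C(X)$ (using that the legs are degreewise split injections with free cokernel, i.e.\ cofibrations between cofibrant objects, so the strict pushout already computes the homotopy pushout), and invoke the small chains theorem to compare this subcomplex with~$C(X)$. The only cosmetic difference is one of ordering: the paper first replaces the given square by the weakly equivalent square with corner $C(U)+C(V)$ (using right properness of~$\Ch$ to justify the invariance) and then shows that square is a strict, hence homotopy, pushout, whereas you assemble the same ingredients in the opposite order.
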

\begin{proof}
    Let $C(S)$ denote the square from the claim and let
    $\iota\colon C^{U,V}(X) \incl C(X)$ be the inclusion of the subcomplex
    generated by the singular simplices that are contained in $U$ or~$V$.
    Then it is well\-known\icite[Prop.~2.21]{b:hatcher02}
    that $\iota$ is a weak equivalence (this is the core of
    the proof of classical excision statements). Now we consider the square
    \newcommand{\ixx}[2]{
        \expandafter\newcommand\csname i#1#2\endcsname{i_{#1\!#2}}
    }%
    \ixx WU\ixx WV\ixx UX\ixx VX%
    \[  \begin{tikzcd}[pmatrix, row sep=1.9em, column sep=1.2cm]
            C(W) \ar[r, "\iWU"'] \ar[d, "\iWV"] & C(U) \ar[d, "\iUX"'] \\
            C(V) \ar[r, "\iVX"] & C^{U,V}(X)
        \end{tikzcd}
        \eqdef S^{U,V}
    \]
    where all morphisms denote the canonical injections
    and the component-wise inclusion $S^{U,V}\to C(S)$, which is
    component-wise a weak equivalence. Hence, \cref{prop:pullbackinvariant}
    applies and we can reduce to the problem of showing that~$S^{U,V}$ is a
    homotopy pullback.
    
    It is known, that $\Ch$~is a
    \emph{stable} model category\icite[Sec.~7.1]{b:hovey07}, so we may
    equivalently\icite[Rem.~7.1.12]{b:hovey07} show that $S^{U,V}$ is a
    homotopy \emph{pushout}.
    But in the model structure on~$\Ch$, all objects in~$S^{U,V}$ are
    cofibrant\icite[Lem.~2.3.6]{b:hovey07} and the morphism $\iWU$ is a
    cofibration by~\cref{lem:Cinjection} (see below), hence it
    suffices\icite[Ex.~8.8]{p:riehlverity14} to show that $S^{U,V}$ is a
    pushout. It is easily seen that this is the case if (and only if)
    \[ C(W) \xrightarrow{(\iWU,\iWV)} C(U)\dsum C(V) \xrightarrow{\iUX-\iVX}
        C^{U,V}(X) \longto 0
    \]
    is an exact sequence in~$\Ch$. But this is true, basically by construction
    of~$C^{U,V}(X)$, so we are done.%
    \qedplacement{\qedNOThere}{}
\end{proof}
This finishes the proof of~\cref{prop:Cexcisive1cat}. In the last section we
promote this model categorical result to the \oo\-categorical setting.

In the proof of the previous proposition, we made use of the following lemma,
for which we include a proof for the sake of completeness. The reader who does
not want to dive into the inner workings of the model category~$\Ch$ may safely
skip the proof and accept it as a technical fact.
\begin{thLemma}[\texorpdfstring{$C$}{C}~of injection] \label{lem:Cinjection}%
    Let $i\colon A\to X$ be an injective map in~$\Top$. Then
    $C(i)\colon C(A)\to C(X)$ is a cofibration in~$\Ch$.
\end{thLemma}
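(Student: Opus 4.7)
The plan is to exhibit $C(i)$ explicitly as a relative cell complex in the projective model structure on~$\Ch$, by building $C(X)$ out of $C(A)$ one simplex-dimension at a time. Using injectivity of~$i$, identify $C(A)$ with the subcomplex of~$C(X)$ generated by singular simplices whose image lies in~$i(A)$. Filter $C(X)$ by subcomplexes $\{F^k\}_{k \geq -1}$ with $F^k_n \defeq C_n(X)$ for $n \leq k$ and $F^k_n \defeq C_n(A)$ for $n > k$; each $F^k$ is closed under~$\partial$ because the boundary operator drops degree by one. In particular $F^{-1} = C(A)$ and $\colim_k F^k = C(X)$, since every chain involves only finitely many simplices of bounded dimension.

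For each $k \geq 0$, I claim that $F^{k-1} \incl F^k$ arises as a pushout
\[
    \xymatrix{
        \bigoplus_\sigma S^{k-1} \ar[r]^-\partial \ar[d] & F^{k-1} \ar[d] \\
        \bigoplus_\sigma D^k \ar[r] & F^k
    }
\]
where $\sigma$ ranges over the singular $k$-simplices of~$X$ whose image is not contained in~$i(A)$, the left-hand vertical map is the standard generating cofibration of the projective model structure on~$\Ch$\icite[Thm.~2.3.11]{b:hovey07} in each summand, and the top map sends each generator to $\partial\sigma \in C_{k-1}(X) \subseteq F^{k-1}$. Granting this, each step $F^{k-1} \incl F^k$ is a pushout of a coproduct of cofibrations and hence a cofibration, so $C(i)\colon C(A) = F^{-1} \incl \colim_k F^k = C(X)$ is a sequential composition of cofibrations and therefore itself a cofibration.

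The main obstacle is the bookkeeping required to verify the pushout claim; concretely, one must check that the displayed square is a pushout in every degree. The only non-trivial case is degree~$k$, where it amounts to the splitting of abelian groups $C_k(X) = C_k(A) \oplus \bigoplus_\sigma \Z\langle\sigma\rangle$ into simplices landing in~$i(A)$ and those that do not, which follows directly from the freeness of~$C_k(X)$ on the set of all singular $k$-simplices of~$X$. In every other degree the diagram collapses to something essentially trivial.
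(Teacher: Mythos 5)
Your argument is correct in substance but takes a genuinely different route from the paper. The paper's proof is a two-line appeal to the characterization of cofibrations in the projective model structure: $C(i)$ is dimensionwise a split injection whose cokernel is a non-negatively graded complex of free modules, hence a cofibration by the cited results of Hovey (Prop.~2.3.9 and Lem.~2.3.6). You instead exhibit $C(i)$ explicitly as a relative cell complex, writing $C(X)$ as a sequential colimit of pushouts of coproducts of the generating cofibrations $S^{k-1}\to D^k$; this is longer, but more self-contained -- it uses only the closure of cofibrations under pushout, coproduct and sequential composition -- and it actually proves the slightly stronger statement that $C(i)$ is a cell map rather than merely a retract of one. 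Your filtration is a valid subcomplex, and the degree-by-degree check of the pushout square (only degree $k$ is nontrivial, where it reduces to the free splitting of $C_k(X)$) is the right verification. One small caveat: the lemma assumes only that $i$ is an injective continuous map, not an embedding, so a singular simplex of $X$ whose image lies in $i(A)$ need not factor \emph{continuously} through $i$. The image of $C(i)$ is therefore the subcomplex spanned by the simplices of the form $i\circ\tau$ with $\tau$ a singular simplex of $A$ (these form a subset of the standard basis precisely because $i$ is injective), and your index set should be the complementary set of basis elements rather than ``simplices whose image is not contained in $i(A)$''. With that adjustment the splitting $C_k(X)\isom C_k(A)\oplus\bigoplus_\sigma\Z\langle\sigma\rangle$ and the rest of your argument go through verbatim.
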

\begin{proof}
    By definition of the singular chain complex functor, $C(A)$ and~$C(X)$ are
    composed of free modules, the chain map~$C(i)$ is dimensionwise injective and maps a
    basis of~$C(A)$ to a subset of a basis of~$C(X)$. Thus, $C(i)$~is
    dimensionwise a split injection with free cokernel, hence a cofibration
    in~$\Ch$\icite[Prop.~2.3.9 and Lem.~2.3.6]{b:hovey07}.%
    \qedplacement{\qedNOThere}{}
\end{proof}

\subsection{Proof of the \pdfInfty-categorical statement}
\begin{proof}[Proof that~\cref{prop:Cexcisive1cat} implies \cref{thm:Cexcisive}]
    \newcommand{\TopOO}{\Top_\infty}%
    \newcommand{\ChOO}{\Ch_\infty}%
    \newcommand{\COO}{C_\infty}%
    \newcommand{\CxO}{\bar{C}}%
    To make this proof more transparent, let us fix the following notation:
    $\Top$~denotes the 1-category, $\TopOO$~the corresponding \oo\-category, i.e.
    the localization \pcref{rem:localization} at weak homotopy equivalences,
    similarly, $\Ch$~denotes the 1-category, $\ChOO$~the \oo\-category, i.e.
    the localization at quasi\-isomorphisms, and $C\colon\Top\to\Ch$ is the
    1-categorical functor, while $\COO\colon\TopOO\to\ChOO$ is the induced
    functor on \oo\-categories (which is well\-defined since $C$~is homotopical,
    \cref{prop:Chomotopical}).
    
    Both, $\Ch$ and~$\ChOO$ are \emph{stable},
    $\Ch$ as a model category\icite[Sec.~7.1]{b:hovey07} and
    $\ChOO$ as \oo\-category\icite[Prop.~1.3.5.9 (with Prop.~1.3.5.15)]{b:HA}.
    In particular, a square in~$\Ch$ is a homotopy pullback if and only if it is
    a homotopy pushout\icite[Rem.~7.1.12]{b:hovey07}, and a square in~$\ChOO$ is a
    pullback if and only if it is a pushout\icite[Prop.~1.1.3.4]{b:HA}.
    
    With this, we see that $\COO$~being excisive is equivalent to
    $\COO$~commuting with pushouts. Hence, it suffices to show that
    $\COO$~commutes with \emph{all} finite colimits. This follows from the dual
    of Proposition~7.5.28 in Cisinski's book\icite{b:catLR}, applied to the
    composition $\CxO\colon\nerve(\Top)\to\ChOO$ of $\nerve(C)$~and the localization functor
    $\nerve(\Ch)\to\ChOO$; for this, $\ChOO$~is made an \oo\-category with weak equivalences
    and cofibrations in a trivial way\icite[Ex.~7.5.4]{b:catLR}. To apply the
    proposition, we just need to show that $\CxO$~is a
    \emph{right exact}\icite[Def.~7.5.2]{b:catLR} functor:
    \begin{enumerate}[(i)]
        \item
            As $C(\emptyset) \cong 0$, initial objects are preserved by~$\CxO$.
            
        \item
            All morphisms in~$\ChOO$ are cofibrations and $C$~is homotopical
            \pcref{prop:Chomotopical}. Thus, $\CxO$~preserves cofibrations and
            trivial cofibrations.
            
        \item
            Lastly, we need to check that pushout squares (in~$\Top$) of the form
            \[ \begin{tikzcd}
                A \ar[r, "i"'] \ar[d] & X \ar[d] \\ A' \ar[r] & X'
                \end{tikzcd}
            \]
            with $i$~a cofibration and $A$ and~$A'$ (and thus also~$X$)
            cofibrant are sent to pushout squares by~$\CxO$. But these conditions
            ensure\icite[Ex.~8.8]{p:riehlverity14} that such a square is also a
            homotopy pushout square, which is preserved by~$C$ by combining
            \cref{prop:Cexcisive1cat} and the comment about stability of~$\Ch$
            above.
            Furthermore, by the first item and \cref{lem:Cinjection}, $C$~also
            preserves the additional properties of $i$, $A$ and $A'$,
            so the square induced by~$C$ is sent to a pushout square in~$\ChOO$%
            \icite[dual of Cor.~7.5.20]{b:catLR}.%
            \qedplacement{\\*\qedhere}{\qedhere}%
    \end{enumerate}
\end{proof}

\section{Relation between abstract and classical excision} \label{sec:classical}%
In this section, we relate abstract excision \pcref{def:excisive} to classical
excision in the form of Mayer-Vietoris sequences. To this end, we will explain
why and to some extent how an excisive functor always admits Mayer-Vietoris
sequences, coinciding with the usual one in the case of singular homology, for
example.

If our excisive functor is defined on~$\Top$,
the first thing to note is that by the last part of \cref{prop:triad} we may indeed apply the
abstract excision property in the setting $X = U\union V$ with $U$ and~$V$ open
in~$X$.

This leaves us with the task of explaining how a pullback square in an
\oo\-category leads to a long exact sequence. To this end, the first step is the
following:

\subsection{Obtaining a fiber sequence} \label{sec:fiberseq}%
Let~$C$ be an \oo\-category with finite limits and a zero object, i.e.
an object~$\ast$ of~$C$ that is both initial and terminal.
Suppose we start with the following pullback in~$C$:
\[ \begin{tikzcd}[cramped, row sep={3.48em,between origins}, column sep={3.8em,between origins}]
    W \ar[r] \ar[d] & U \ar[d] \\ V \ar[r] & X
    \rlap{.}
    \end{tikzcd}
 \]
Then from this, one can derive a fiber sequence like this:
\begin{equation} \label{eq:pbfiberseq}
    \Omega X  \to  W  \to  U \cross V
, \end{equation}
where $\Omega$ denotes the loop functor. Here, being a fiber sequence means (by
definition) being a pullback square
\[ \begin{tikzcd}
    \Omega X \ar[r] \ar[d] & W \ar[d] \\ \ast \ar[r] & U\cross V
    \end{tikzcd}
\]
and the loop functor $\Omega\colon C\to C$ assigns to an object~$X$ the pullback
of $\ast \to X \leftarrow \ast$, or in other words
it completes $\ast\to X$ to a fiber sequence $\Omega X \to \ast \to X$.
Lurie describes in detail how to construct loop and suspension
functors\icite[paragraph preceeding Rem.~1.1.2.6]{b:HA}.

For a precise treatment of the passage from pullback
to fiber sequence, we would need more technicalities about limits in
\oo\-categories. Informally, though, the basic ideas are as follows:
We have two pullback squares and a morphism~$\alpha\colon W\to U\cross V$
induced by the universal property of the product as in the right part of this
diagram:
\[
\newcommand{\fib}{\operatorname{fib}}
\begin{tikzcd}[
    , column sep={1cm,between origins}
    ,    row sep={1cm,between origins}
    , matrix={name=M}
    , execute before arrows={
        \path
            node also [alias=fiAlpha] (M-1-2)
            node also [alias=fiU]     (M-2-4)
            node also [alias=fiV]     (M-3-1)
            node also [alias=fiX]     (M-4-3)
            node also [alias=pbW]     (M-1-6)
            node also [alias=pbU]     (M-2-8)
            node also [alias=pbV]     (M-3-5)
            node also [alias=pbX]     (M-4-7)
            node also [alias=prUxV]   (M-1-10)
            node also [alias=prU]     (M-2-12)
            node also [alias=prV]     (M-3-9)
            node also [alias=prPt]    (M-4-11)
        ;
    }
    , gray column/.style={/tikz/column #1/.append style={color=gray}}
    , gray column/.list={1,2,3,4}
    ]
    & \fib(\alpha) &&&& W &&&& U\cross V && \\
    &&& \ast &&&& U &&&& U \\
    \ast &&&& V &&&& V &&& \\
    && X &&&& X &&&& \ast
    \rlap{\kern1pt.}
    &
    %
    \arrow[from=fiAlpha, to=fiV , gray                    ]
    \arrow[from=fiV    , to=fiX , gray                    ]
    \arrow[from=pbW    , to=pbV                           ]
    \arrow[from=pbV    , to=pbX                           ]
    \arrow[from=prUxV  , to=prV                           ]
    \arrow[from=prV    , to=prPt                          ]
    \arrow[from=fiV    , to=pbV , gray                    ]
    \arrow[from=pbV    , to=prV ,       "\id_V" {pos=0.33}]
    \arrow[crossing over, from=fiAlpha, to=fiU  , gray                     ]
    \arrow[crossing over, from=fiU    , to=fiX  , gray                     ]
    \arrow[crossing over, from=pbW    , to=pbU                             ]
    \arrow[crossing over, from=pbU    , to=pbX                             ]
    \arrow[crossing over, from=prUxV  , to=prU                             ]
    \arrow[crossing over, from=prU    , to=prPt                            ]
    \arrow[crossing over, from=fiAlpha, to=pbW  , gray                     ]
    \arrow[crossing over, from=fiU    , to=pbU  , gray                     ]
    \arrow[crossing over, from=fiX    , to=pbX  , gray                     ]
    \arrow[crossing over, from=pbW    , to=prUxV,       "\alpha"           ]
    \arrow[crossing over, from=pbU    , to=prU  ,       "\id_U"' {pos=0.68}]
    \arrow[crossing over, from=pbX    , to=prPt                            ]
\end{tikzcd}
\]
Taking fibers (also a limit!) in the horizontal direction and using the
fact that limits commute with limits\icite[dual of Lem.~5.5.2.3]{b:HTT}, as is the case
in 1-categories, we obtain that the left square is also a
pullback. Hence, by definition of the loop functor,
$\operatorname{fib}(\alpha)$ is (equivalent to) $\Omega X$.

\subsection{Applying the mapping space functor} \label{sec:Map}%
Let $C$ be a 1-category. A quite important notion in 1-category theory is the
functor
\[ \Mor_C\colon C\op \cross C \to \Set \]
that takes a pair $(x,y)$ of objects of~$C$ to the set of morphisms
$\Mor_C(x,y)$. In the literature, this is often called \enquote{Hom-functor}.

Now let $C$ be an \oo\-category.
One slogan of \oo\-category theory is: the role of sets in
1-category theory is taken by topological spaces in \oo\-category theory.
This motivates the desire to find, for a pair of objects $(x,y)$
of~$C$, not only a set of morphisms $x\to y$, but instead a \emph{space}
of such morphisms -- and indeed, this is always possible. The
following remark indicates how, but its point is rather to illustrate the
analogy between the 1- and \oo\-categorical situation than to provide the reader
with a better understanding of the space itself. Thus, it can also safely be
skipped.

\begin{thRemark}[construction of mapping spaces] \label{rem:mappingspace}%
    We consider the 1-categorical situation first. Here, the set $\Mor_C(x,y)$
    can be found as a pullback of sets as in the following pullback
    square:
    \[ \begin{tikzcd}[row sep=1.2cm]
            \color{gray}\Mor_C(x,y) \ar[r, gray] \ar[d, gray]
            & \Mor(C)
                \ar[d, "{
                    \def\ss{\scriptstyle}
                    \begin{gathered}
                        \ss (f\colon\! a\to b)\\[1pt]
                        \MTFlushSpaceAbove
                        \rotatebox[origin=c]{-90}{$\ss\mapsto$}
                        \MTFlushSpaceBelow
                        \ss (a,b)
                    \end{gathered}
                }"]
            \\
            \{0\} \ar[r, "{0 \mapsto (x,y)}"]
            & \Ob(C)\cross\Ob(C)
        \end{tikzcd}
    \]
    where $\Mor(C)$ and $\Ob(C)$ denote the class of morphisms and objects
    of~$C$, respectively.
    
    Now if $C$ is instead an \oo\-category, we can form the analogous diagram
    \[ \begin{tikzcd}[row sep=0.9cm]
            & C^{\mathrlap{\stdsimplex 1}}
                \ar[d, "{\left((\coface01)^*,\,(\coface00)^*\right)}"]
            \\
            \stdsimplex 0 \ar[r, "{\ast_{(x,y)}}"]
            & C^{\stdsimplex 0}\cross C^{\stdsimplex 0}
        \end{tikzcd}
    \]
    in~$\sSet$ and \emph{define} the space of morphisms from $x$ to~$y$ as
    (the geometric realization of) its pullback; here we use the following
    notation:
    \begin{itemize}
        \item
            $C^{\stdsimplex n}$ is the \emph{internal Hom} simplicial set as
            in \cref{rem:internalhom},
        \item
            $(\coface0i)^*\colon C^{\stdsimplex 1}\to C^{\stdsimplex 0}$ is the morphism
            induced by the $i$-th coface map~$\coface{0}{i}$,
            and
        \item
            $\ast_{(x,y)}$ denotes the unique morphism with
            \[ (\stdsimplex 0)_0 \ni \Id_{[0]} \mapsto (x,y) \in C_0\cross C_0
            \]
            under the identification
            $C_0\cross C_0
            \isom \Mor_{\sSet}(\stdsimplex 0, C) \cross \Mor_{\sSet}(\stdsimplex 0, C)
            = \bigl( C^{\stdsimplex 0}\cross C^{\stdsimplex 0} \bigr)_0$
            from \cref{ex:yoneda}.%
    \end{itemize}
    Dugger and Spivak\icite{p:duggerspivak11b} discuss multiple ways to
    construct mapping spaces and in particular they consider the
    construction above\icite[Prop.~1.2]{p:duggerspivak11b}.%
    \envEnds
\end{thRemark}

If properly organized\icite[Sec.~5.8]{b:cisinski19}, one can even define a
functor of \oo\-categories
\[ \Map_C\colon C\op\cross C \to \Top  ,\]
which we call $\Map_C$ to distinguish it from the 1-categorical concept. (Note,
however, that $\Map_C(x,y)$ will in general only be weakly equivalent to the
space that we constructed in \cref{rem:mappingspace}. Also, in Cisinski's book
the target of the mapping space functor is~\enquote{$\mathscr{S}$}, so to get
one to~$\Top$, we have to compose with an equivalence between those two
\oo\-categories\icite[Rem.~7.8.10 and Rem.~7.8.11]{b:cisinski19}.)
For this functor one can show:
\begin{thProposition}[\titlecite{b:cisinski19}{Cor.~6.3.5}] \label{prop:mapcontinuous}%
    Let $C$~be a locally small \oo\-category and let $x$ be an object of~$C$.
    Then the functor $\Map_C(x,\blank)\colon C\to\Top$ preserves limits.
\end{thProposition}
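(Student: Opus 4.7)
The plan is to reduce the claim to the \oo-categorial analogue of the classical Yoneda principle: representable functors preserve limits. In 1-category theory this is almost tautological, since a limit cone is by definition one that induces a bijection on $\Mor_C(x,\scdot)$ for every~$x$; in the \oo-world the corresponding statement asks for equivalences of mapping spaces instead of bijections of morphism sets, but the reasoning is structurally the same.

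First, I would invoke the Yoneda-style characterization of limits in an \oo-category: a cone $\bar p\colon K^{\triangleleft}\to C$ over a diagram $p\colon K\to C$ with cone point~$L$ is a limit cone if and only if, for every object $x\in C$, the induced cone in~$\Top$ exhibits $\Map_C(x,L)$ as the limit of the diagram $k\mapsto \Map_C(x,p(k))$. This rests on the fully faithful Yoneda embedding $y\colon C\to\Fun(C\op,\Top)$ together with the fact that limits in presheaf \oo-categories are computed pointwise.

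Granted this, the conclusion is immediate. Let $p\colon K\to C$ be a diagram admitting a limit and let $\bar p$ be a limit cone with cone point~$L$. Applying $\Map_C(x,\scdot)$ yields a cone in~$\Top$ over $k\mapsto\Map_C(x,p(k))$ with cone point $\Map_C(x,L)$, and the characterization above says precisely that this cone is again a limit cone. Hence $\Map_C(x,\scdot)$ sends limit cones to limit cones.

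The main obstacle is the Yoneda characterization itself. Unlike the 1-categorial case, limits are not specified by a simple universal property relating morphism sets, but rather as objects in an auxiliary \oo-category of cones (as sketched in our earlier outlook on (co)limits); passing between \enquote{$L$ is a limit of~$p$} and \enquote{$\Map_C(x,L)$ is a limit of $\Map_C(x,p(\scdot))$} requires homotopy-coherent bookkeeping via slice \oo-categories, the behaviour of mapping spaces under joins, and pointwise computation of limits in functor \oo-categories. Once this machinery is assembled (which is essentially the content of Section~6.3 of Cisinski's book), the preservation of limits by mapping-space functors becomes as formal as in the classical Yoneda argument.
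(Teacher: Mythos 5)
The paper gives no proof of its own here -- it simply cites Cisinski's Cor.~6.3.5 -- and your argument is precisely the one underlying that reference: the Yoneda-style characterization of limits via mapping spaces, from which preservation by $\Map_C(x,\scdot)$ is formal. Your outline is correct and correctly identifies where the real work lies (establishing that characterization), so it matches the intended proof.
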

In particular, since these are defined via pullbacks, $\Map_C(x,\blank)$
maps fiber sequences to fiber sequences. This gives us a method to convert the
fiber sequence~\eqref{eq:pbfiberseq} from \cref{sec:fiberseq} to one in~$\Top$: for any
test object~$S$ of~$C$ we obtain
\begin{equation} \label{eq:mapfiberseq}
    \Map_C(S,\Omega X)  \to  \Map_C(S,W)  \to  \Map_C(S,U \cross V)
, \end{equation}
a fiber sequence of spaces, and canonically also of
pointed spaces by virtue of zero maps.

\subsection{The Mayer-Vietoris sequence} \label{sec:mayervietorishomotopy}%
As is well\-known, a fiber sequence $F\to X\to Y$ of pointed spaces
always yields a long exact sequence of homotopy groups:
\[ \cdots \to
    \pi_n(F) \to \pi_n(X) \to \pi_n(Y) \to \pi_{n-1}(F) \to \cdots
.\]
Using this on the fiber sequence~\eqref{eq:mapfiberseq} from the last section,
again the fact that $\Map_C(S,\mspace{0mu plus 0mu minus 0.6mu}\blank\mspace{0mu plus 0mu minus 0.8mu})$ 
preserves limits \pcref{prop:mapcontinuous},
and well\-known facts about the homotopy groups,
we get the following exact sequence:
\begin{multline} \label{eq:htpygroupsofmaples}
    \cdots \to
    \pi_{n+1}\Map_C(S,X) \to \pi_n\Map_C(S,W) \\ \to
    \pi_n\Map_C(S,U) \cross \pi_n\Map_C(S,V)  \to
    \pi_n\Map_C(S,X) \to \cdots
. \end{multline}
In this sense, one can always extract Mayer-Vietoris type sequences
from a pullback square.

\subsection{Example: singular homology}
We shall now explain, how the long exact sequence~\eqref{eq:htpygroupsofmaples}
from \cref{sec:mayervietorishomotopy}
about homotopy groups also yields the familiar Mayer-Vietoris
sequence of \emph{homology} groups. For this we have to use the
\enquote{correct} object~$S$ plus some additional arguments.
In the following, we reuse the notation from the beginning of
\cref{sec:singular}.

Let us first recapitulate the situation: We have a space $X = U\union V$ with $U$
and~$V$ open in~$X$, $W = U\intersect V$, and $C\colon\Top\to\Ch$~the singular chain
complex functor with arbitrary constant coefficients over some ring~$k$.
From the pushout formed by $U\intersect V$, $U$, $V$ and~$X$, we get the according
pullback of chain complexes. We now apply the previous material, where for the test
complex~$S$ we choose the complex $k[0]$ that has $k$ in degree~$0$ and is
trivial elsewhere. Note, that $k[0]$ is just one usual notation for this and no
connection to the preorder category~$[0]$ from \cref{def:simplicialset} is
intended.
Looking at the sequence~\eqref{eq:htpygroupsofmaples}, we then get a lot of
terms of the form
\[ \pi_n\Map_{\Ch}\bigl(k[0],C(Z)\bigr) \]
for $n\in\N$ and $Z$~one of $W$, $U$, $V$, $X$, which can be computed via the
following:
\begin{thProposition} \label{prop:htpygroupofMap}%
    Let $c$ be a chain complex and let~$n\in\N$. Then:
    \[ \pi_n\Map_{\Ch}(k[0],c) \isom H_n(c) \]
    where the right hand side is the $n$-th homology of~$c$.
\end{thProposition}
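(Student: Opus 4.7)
The plan is to leverage the stability of the \oo-category~$\Ch$ to reduce the claim to the case $n = 0$, and then compute $\pi_0$ of the mapping space by identifying it with morphisms in the homotopy 1-category, which is the classical derived category.

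First, I would invoke stability of~$\Ch$ \icite[Prop.~1.3.5.9 (with Prop.~1.3.5.15)]{b:HA} together with the fact that $\Map_{\Ch}(\Z[0], \scdot)$ preserves limits \pcref{prop:mapcontinuous}. For any $n\in\N$ these combine to give
\[ \pi_n \Map_{\Ch}(\Z[0], c)
     = \pi_0 \bigl( \Omega^n \Map_{\Ch}(\Z[0], c) \bigr)
     \isom \pi_0 \Map_{\Ch}(\Z[0], \Omega^n c) . \]
Using the explicit description of the loop functor on chain complexes from the proof of \cref{thm:main}, namely $(\Omega^n c)_k = c_{k+n}$, one immediately has $H_0(\Omega^n c) = H_n(c)$. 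Hence it suffices to treat the case $n = 0$, i.e.\ to prove $\pi_0 \Map_{\Ch}(\Z[0], d) \isom H_0(d)$ for every chain complex~$d$.

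For the latter, I would pass to the homotopy 1-category~$h\Ch$ of the \oo-category~$\Ch$, where $\pi_0$ of a mapping space is by definition the morphism set. Since $\Ch$~arises as the \oo-categorial localization \pcref{rem:localization} of the 1-category of chain complexes at the quasi-isomorphisms, $h\Ch$~coincides with the classical derived category $D(\Z)$, in analogy with the situation for simplicial model categories discussed at the end of \cref{rem:homotopycategories}. It then remains to verify the purely 1-categorial computation $\operatorname{Hom}_{D(\Z)}(\Z[0], d) \isom H_0(d)$: as $\Z$~is projective, $\Z[0]$~is cofibrant in the projective model structure on~$\Ch$, so morphisms out of it in the derived category are represented by chain homotopy classes of chain maps $\Z[0] \to d$, which correspond to cycles in~$d_0$ modulo boundaries.

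The step I expect to be most delicate is the identification of $h\Ch$ with~$D(\Z)$: while this is essentially folklore, it requires some care about the chosen \oo-categorial model of~$\Ch$, since a priori $h\Ch$ is the homotopy category of an \oo-categorial localization and not of a 1-categorial one. An alternative route, bypassing this identification, would be to equip the 1-category of chain complexes with a simplicial enrichment (e.g.\ via Dold--Kan on the hom-complexes truncated appropriately) and compute $\Map_{\Ch}(\Z[0], d)$ directly as (the underlying space of) a simplicial abelian group whose homotopy groups recover the homology of~$d$.
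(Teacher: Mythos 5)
Your argument is correct, but it takes a genuinely different route from the paper. The paper computes the whole homotopy type of the mapping space directly: it models the \oo-category of chain complexes via the DG-nerve of the corresponding DG-category, where mapping spaces are explicitly the realizations of the Dold--Kan images of truncated hom-complexes\icite[Rem.~1.3.1.12]{b:HA}, reads off all homotopy groups at once, and then transfers the answer from the localization at chain homotopy equivalences to the localization at quasi-isomorphisms by comparing the Hurewicz, projective and mixed model structures (using that $\Z[0]$ is cofibrant in all of them). You instead use stability and the limit-preservation of $\Map_{\Ch}(\Z[0],\scdot)$ \pcref{prop:mapcontinuous} to shift the problem down to $\pi_0$, where it becomes the classical computation $\Mor_{D(\Z)}(\Z[0],d)\isom H_0(d)$ in the derived category. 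The step you flag as delicate --- identifying $h\Ch$ with $D(\Z)$ --- is exactly what \cref{rem:localization} records (via\icite[Rem.~7.1.6]{b:cisinski19}), so it is available without further work, and the shift description of~$\Omega$ that your reduction $H_0(\Omega^n c)\isom H_n(c)$ relies on is already used in the proof of \cref{thm:main}. Two small points you should make explicit: $\pi_n$ and $\Omega^n$ of the mapping space are taken at the basepoint given by the zero morphism (harmless, since in a stable \oo-category the mapping spaces are grouplike, so all choices agree), and the identification of derived-category morphisms out of the cofibrant object $\Z[0]$ with chain homotopy classes uses that every object is fibrant in the projective model structure\icite[Thm.~2.3.11]{b:hovey07}. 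What your route buys is the complete avoidance of the DG-nerve and of the mixed-model-structure comparison; what the paper's route buys is the full weak homotopy type of $\Map_{\Ch}(\Z[0],c)$ rather than just its homotopy groups.
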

\begin{proof}
    We split the proof as follows:
    \begin{enumerate}[(1)]
        \item \label{prop:htpygroupofMap:proof:dgCh}%
            In the first part, we show the claim with $\Ch$~replaced
            by a weaker localization, namely by the localization at
            chain homotopy equivalences (instead of all quasi\-isomorphisms).
            
        \item \label{prop:htpygroupofMap:proof:bousfield}%
            In the second part, we pass from this intermediate \oo\-category of
            chain complexes to~$\Ch$. In contrast to the first part, which
            provides the link between mapping spaces and homology, this part
            can be regarded as a technicality which shows that mapping out
            of~$k[0]$ provides the same homotopical view on other objects
            in both localizations.
    \end{enumerate}
    
    As both steps use technology that was not previously introduced in this
    article, we do \emph{not} expect the reader to understand all the details.
    However, we try hard to conceptually explain what happens and provide
    necessary literature pointers.
    
    \emph{Ad~\ref{prop:htpygroupofMap:proof:dgCh}.}
    \newcommand{\geomrealization}[1]{\abs*{#1}}
    \newcommand{\DK}{\operatorname{K}}
    \newcommand{\truncation}{\tau_{\geq0}}%
    We can\icitekerodon{00PE} view the 1-category~$\Ch$ as a
    \emph{DG-category}, which we denote by~$\dgCh$; this
    means\icitekerodon{00P9} that $\dgCh$ is enriched over chain complexes of
    Abelian groups, i.e.
    $\Mor_{\dgCh}(c',c)$ is an object of~$\Ch_{\Z}$ for all chain
    complexes~$c',c$.
    Then the so-called \emph{DG-nerve}\icitekerodon{00PK}, a gadget similar in
    spirit to the homotopy coherent nerve from \cref{rem:hcnerve}, provides us
    with an \oo\-category $\nervedg(\dgCh)$, which can be seen as the
    localization \pcref{rem:localization} of~$\Ch$ at chain homotopy
    equivalences\icite[Prop.~1.3.4.5]{b:HA}. Mapping spaces in \oo\-categories
    obtained via~$\nervedg$ are particularly tractable%
    \icite[Rem.~1.3.1.12]{b:HA}, hence
    \[ \Map_{\nervedg(\dgCh)}(k[0],c)
        \equiv \geomrealization{ \DK\bigl(\truncation \Mor_{\dgCh}(k[0],c)\bigr) }
    , \]
    where
    \begin{itemize}[itemsep={3pt plus 1pt minus .5pt}]
        \item $\geomrealization{\blank}$ denotes the geometric realization of a
            simplicial set (in this case: after forgetting the group structure),

        \item $\DK$ sends a non-negative chain complex to its associated
            simplicial Abelian group under the \emph{Dold-Kan correspondence}%
            \icite[Sec.~8.4]{b:weibel95},
            and
            
        \item $\truncation$ is the good truncation
            functor\icite[{}1.2.7]{b:weibel95} that essentially
            forces a chain complex to be non-negative.
    \end{itemize}
    Thus, we see
    \begin{align*}
        \pi_n\Map_{\nervedg(\dgCh)}(k[0],c)
        &\isom \pi_n\DK\truncation \Mor_{\dgCh}(k[0],c)  \\
        &\isom H_n\bigl(\Mor_{\dgCh}(k[0],c)\bigr)       \\
        &\isom H_n(c)
    , \end{align*}
    because taking the homotopy group of a geometric realization is isomorphic
    to the simplicial homotopy group\icite[p.~60]{b:goerssjardine09} and
    the latter is isomorphic to the homology of the associated chain complex%
    \icite[Thm.~8.4.1]{b:weibel95}; finally, $\Mor_{\dgCh}(k[0],c) \isom c$
    is easily seen by expanding the definitions.
    This shows the claim for~$\nervedg(\dgCh)$ instead of~$\Ch$.
    
    \emph{Ad~\ref{prop:htpygroupofMap:proof:bousfield}.}
    To pass from~$\nervedg(\dgCh)$ to~$\Ch$ (i.e. from localization at chain
    homotopy equivalences to localization at all quasi-isomorphisms), we
    additionally use the so-called \emph{Hurewicz model structure} on~$\Ch$,
    which has chain homotopy equivalences as weak equivalences. It
    can be combined with the projective model structure to yield a \emph{mixed
    model structure}\icite[Sec.~17.3]{b:mayponto12}. All three model structures are
    discussed in May and Ponto's book under the names
    \enquote{\textit{h}-/\textit{q}-/\textit{m}-model
    structure}\icite[Sec. 18.3, 18.4 and~18.6]{b:mayponto12}.
    Both, projective and mixed, have the quasi-isomorphisms as weak
    equivalences, but the latter has the advantage of being a right Bousfield
    localization of the Hurewicz model structure. This can be used to see that
    the restriction of a mapping space functor in the sense of
    Cisinski\icite[(dual/cofibrant version of)~7.10.7]{b:catLR}
    for the Hurewicz model structure yields a mapping space functor for the
    mixed model structure.
    Together with the fact that $k[0]$~is cofibrant in all three
    model structures%
    \icite[Thm.~18.3.1/Prop.~18.5.2\,(iii)/Sec.~18.6]{b:mayponto12},
    we get
    \[ \Map_{\nervedg(\dgCh)}(k[0],c) \equiv \Map_{\Ch}(k[0],c)
    .
    \qedplacement{\qedafterdisplay}{\qedhere} \]
\end{proof}
\noindent%
If we apply this proposition to the long exact sequence~\eqref{eq:htpygroupsofmaples} that we get from
\cref{sec:mayervietorishomotopy} for $S = k[0]$, we indeed arrive at the
familiar long exact sequence:
\[ \cdots
    \to H_{n+1}\bigl(C(X)\bigr)
    \to H_n\bigl(C(U\intersect V)\bigr)
    \to H_n\bigl(C(U)\bigr)\cross H_n\bigl(C(V)\bigr)
    \to H_n\bigl(C(X)\bigr)
    \to \cdots
. \]

\appendix
\clearpage
\begingroup
\setlength{\hfuzz}{10pt}
\printbibliography
\endgroup
\end{document}